\tikzstyle{vertex}=[circle, draw, inner sep=0pt, minimum size=4pt]
\newcommand{\vertex}{\node[vertex]}
\theoremstyle{plain}
\newtheorem{thm}{Theorem}[section]
\newtheorem{thmx}{Theorem}
\newtheorem{corx}[thmx]{Corollary}
\newtheorem{lem}[thm]{Lemma}
\newtheorem{pro}[thm]{Proposition}
\newtheorem{conjecture}[thm]{Conjecture}
\newtheorem{question}[thm]{Question}
\theoremstyle{remark}
\newtheorem{rem}[thm]{Remark}
\theoremstyle{definition} 
\newtheorem{example}[thm]{Example}
\numberwithin{equation}{section}
\newcommand{\ackn}{  \noindent{\sc Acknowledgement }\hspace{5pt} }
\renewcommand{\phi}{\varphi}
\begin{document}

\author{Ilir Snopce}
\address{Universidade Federal do Rio de Janeiro\\
  Instituto de Matem\'atica \\
  21941-909 Rio de Janeiro, RJ \\ Brazil }
\email{ilir@im.ufrj.br}

\author{Slobodan Tanushevski} \address{Universidade Federal Fluminense\\
  Instituto de Matem\'atica e Estat\'istica \\
  24210-201, Niter\'oi, RJ \\ Brazil }
\email{tanusevski1@gmail.com}

\author{Pavel Zalesskii}
\address{University of Bras\'ilia\\
  Department of  Mathematics \\
  70910-9000, Bras\'ilia \\ Brazil }
\email{pz@mat.unb.br}

\thanks{}

\title[Retracts of free groups and a question of Bergman] {Retracts of free groups and a question of Bergman}

\begin{abstract} 
Let $F_n$ be a free group of finite rank $n \geq 2$.
We prove that if $H$ is a subgroup of $F_n$ with $\textrm{rk}(H)=2$ and $R$ is a retract of $F_n$, then $H \cap R$ is a retract of $H$.
However, for every $m \geq 3$ and every $1 \leq k \leq n-1$, there exist a subgroup $H$ of $F_n$ of rank $m$ and a retract $R$ of $F_n$ of rank $k$ such that
$H \cap R$ is not a retract of $H$. This gives a complete answer to a question of Bergman.

Furthermore, we provide positive evidence for the inertia conjecture of Dicks and Ventura. More precisely, we prove that $\textrm{rk}(H \cap \textrm{Fix}(S)) \leq \textrm{rk}(H)$ 
for every family $S$ of endomorphisms of $F_n$ and every subgroup $H$ of $F_n$ with $\textrm{rk}(H) \leq 3$.

\end{abstract}

\subjclass[2010]{20E05, 20E36, 20E07, 20E18 }

\maketitle
\section{Introduction}
Throughout,  $F_n$ denotes a free group of finite rank $n \geq 2$. 
A subgroup $R \leq F_n$ is a  retract of $F_n$ if there exists a homomorphism $r:F_n \to R$ (called a retraction) that restricts to the identity on $R$.
In 1999, Bergman proved the following 
\begin{thm}[Bergman, \cite{Bergman}] 
\label{intersection of retracts}
The intersection of any family of retracts of $F_n$ is a retract of $F_n$. 
\end{thm}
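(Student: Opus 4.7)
The plan is to prove the theorem in three steps: a descending chain condition (DCC) on retracts of $F_n$, a reduction of the arbitrary-family case to that of two retracts, and the key lemma that the intersection of two retracts is itself a retract.

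\emph{Descending chain condition.} If $R' \subsetneq R$ are both retracts of $F_n$, then restricting the ambient retraction $F_n \to R'$ to $R$ exhibits $R'$ as a proper retract of $R \cong F_{\rank(R)}$ via some $\rho \colon R \to R'$. If one had $\rank(R') = \rank(R)$, then $\rho$ would be a surjective endomorphism of a finitely generated free group, hence an isomorphism by Hopfianity; combined with the idempotence $\rho^2 = \rho$, this would force $\rho = \mathrm{id}_R$, contradicting $R' \subsetneq R$. Thus $\rank(R') < \rank(R)$ along any strict descending chain of retracts, which must therefore be finite.

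\emph{Reduction.} Granting that pairwise intersections of retracts are retracts, induction on the number of factors shows the same for arbitrary finite intersections. For a general family $\mathcal{F}$ of retracts, the poset $\mathcal{F}'$ of finite intersections of members of $\mathcal{F}$ thus consists of retracts, and by the DCC has a minimal element $R^{*} = R_1 \cap \cdots \cap R_k$. For any $R \in \mathcal{F}$, the element $R^{*} \cap R \in \mathcal{F}'$ is contained in $R^{*}$, so minimality forces $R^{*} \cap R = R^{*}$, i.e.\ $R^{*} \subseteq R$. Hence $R^{*} = \bigcap \mathcal{F}$, which is a retract.

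\emph{Two retracts --- the main step.} Given retractions $r\colon F_n \to R$ and $s\colon F_n \to S$, the starting observation is the identity
\[
R \cap S \;=\; \mathrm{Fix}(rs) \cap \mathrm{Fix}(sr),
\]
whose verification is elementary: if $x \in R \cap S$ then $rs(x) = r(x) = x$ and $sr(x) = s(x) = x$; conversely $rs(x) = x$ forces $x \in \mathrm{Im}(r) = R$, while $sr(x) = x$ forces $x \in \mathrm{Im}(s) = S$. I would then construct a retraction onto $R \cap S$ via an iterative argument using the compositions $rs$ and $sr$: analyzing the dynamics of these iterates on $F_n$, and exploiting the Hopfian property of finitely generated free groups together with bounded-rank stabilization results in free groups (Takahasi's theorem on ascending chains of subgroups of bounded rank, and the Imrich--Turner bound on ranks of fixed subgroups of endomorphisms), one obtains an induced automorphism on a suitably stabilized invariant subgroup whose inverse, composed with a high iterate of $rs$ or $sr$, assembles into the sought retraction.

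The principal obstacle is this last step. The formal parts, namely the DCC and the reduction from arbitrary families to pairs, are cheap; the substantive content is converting the algebraic stabilization data into an actual retraction of all of $F_n$ onto exactly $R \cap S$, rather than onto some a priori larger subgroup. This requires careful intertwining of the two retractions $r$ and $s$ and their iterates, and is sensitive to the fact that the descending chains of images $(rs)^k(F_n)$ need not stabilize even when the corresponding ascending chains of fixed subgroups do.
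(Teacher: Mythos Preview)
The paper does not contain a proof of this theorem: it is quoted from Bergman's 1999 paper and used as a black box throughout. So there is no ``paper's own proof'' to compare against.

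As for the proposal itself, the DCC argument and the reduction from arbitrary families to pairs are correct and standard. The genuine gap is the two-retract step, which you yourself flag as the obstacle and do not actually carry out. The identity $R\cap S=\mathrm{Fix}(rs)\cap\mathrm{Fix}(sr)$ is correct, but the suggested strategy of analyzing iterates of $rs$ and $sr$ and invoking Hopfianity plus Takahasi-type stabilization does not obviously produce a retraction onto $R\cap S$. The difficulty is structural: fixed subgroups of endomorphisms of $F_n$ are \emph{not} in general retracts (e.g.\ $\mathrm{Fix}(\alpha)$ for a nontrivial automorphism $\alpha$), and the Turner machinery you allude to yields that $\varphi^{\infty}(F_n)$ is a retract, not that $\mathrm{Fix}(\varphi)$ is. So even after stabilization you would at best land on some retract containing $R\cap S$, with no mechanism offered to cut down to $R\cap S$ exactly. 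Your final paragraph acknowledges precisely this, which means the proposal is a plan with the hard part left open rather than a proof.

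For context, Bergman's actual argument in \cite{Bergman} proceeds along entirely different lines, via supports of derivations in the free $\mathbb{Z}F_n$-module of rank $n$; it does not use the dynamical/iterative picture you sketch.
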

In the same paper, he raised the following 
\begin{question}[Bergman, \cite{Bergman}]
\label{Bergman}
Let $R$ be a retract of $F_n$. Is $H \cap R$ a retract of $H$ for every finitely generated subgroup $H$ of $F_n$?
\end{question}

The same question also appears in \cite[Problem~F11]{Baumslag}, \cite[Problem~17.19]{Khukhro-Mazurov} and \cite{Ventura}.  In addition to being important on its own right, another reason for the sustained interest in Bergman's question is due to its connection to the study of fixed subgroups of endomorphisms of free groups.

For a given family $S$ of endomorphisms of $F_n$, let 
\[\textrm{Fix}(S)=\{w \in F_n \mid \varphi(w)=w \text{ for every } \varphi \in S\}\] 
denote the fixed subgroup of $S$. In the seminal paper \cite{Bestvina-Handel}, Bestvina and Handel proved that $\textrm{rk}(\textrm{Fix}(\alpha)) \leq n$ for every automorphism $\alpha$ of $F_n$.
By an elementary algebraic argument, Imrich and Turner \cite{Imrich-Turner} extended this result to all endomorphisms of $F_n$. 
In the monograph \cite{Dicks}, Dicks and Ventura introduced the concept of inertia of subgroups of free groups: A subgroup $H$ of $F_n$ is inert if $\textrm{rk}(K \cap H) \leq \textrm{rk}(K)$ for every subgroup $K$ of $F_n$.
After reformulating (in a more algebraic language) and extending the Bestvina-Handel theory, Dicks and Ventura proved that $\textrm{Fix}(S)$ is inert (in particular, $\textrm{rk}(\textrm{Fix}(S)) \leq n$) for every family $S$ of injective endomorphisms of $F_n$. 
Furthermore, they conjectured that $\textrm{Fix}(S)$ is inert for an arbitrary family $S$ of endomorphisms of $F_n$. 
In \cite{Bergman}, Bergman provided evidence for the Dicks-Ventura conjecture by proving the pinnacle result on the ranks of fixed subgroups of endomorphisms of free groups:
$\textrm{rk}(\textrm{Fix}(S)) \leq n$ for every family $S$ of endomorphisms of $F_n$.

By an argument due to Turner \cite{Turner}, the Dicks-Ventura conjecture is equivalent to the following

\begin{conjecture}[Dicks-Ventura, \cite{Dicks}]
\label{Dicks-Ventura}
Every retract of $F_n$ is inert. 
\end{conjecture}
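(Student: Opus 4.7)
The plan is to attack Conjecture~\ref{Dicks-Ventura} via Stallings subgroup graphs together with the extra structure a retraction imposes on $F_n$. Let $R\le F_n$ be a retract with retraction $r:F_n\to R$, giving the semi-direct decomposition $F_n=R\ltimes N$ where $N=\ker(r)$. For a finitely generated $K\le F_n$ one wants $\textrm{rk}(K\cap R)\le\textrm{rk}(K)$. First, I would model $K$ and $R$ by folded Stallings graphs $\Gamma_K,\Gamma_R$ over the rose representing $F_n$, so that $K\cap R$ is the fundamental group of the core of the fibre product $\Gamma_K\times\Gamma_R$. A direct Hanna Neumann style estimate on this core only yields a quadratic bound on $\textrm{rk}(K\cap R)$, so the retraction structure must be exploited to sharpen it to the desired linear bound.

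The central idea would be to use the fact that $r$ induces a label-preserving combinatorial map from the ambient rose onto $\Gamma_R$ that restricts to the identity on the subrose representing $R$. Pulled back along the folded immersion representing $\Gamma_K$, this yields a combinatorial self-map on (a suitable fold of) $\Gamma_K$ whose fixed subgraph is essentially $\Gamma_{K\cap R}$. One would then seek a Bestvina--Handel / train-track type analysis of this self-map, bounding the rank of its fixed subgraph by that of the ambient graph. If the self-map could be realised as a genuine graph retraction one would obtain the stronger conclusion that $K\cap R$ is itself a retract of $K$; this stronger statement matches the paper's first theorem when $\textrm{rk}(K)=2$, but the counterexamples announced in the abstract for $\textrm{rk}(K)\ge 3$ show that any proof of Conjecture~\ref{Dicks-Ventura} in higher rank must establish only the rank inequality and not retraction.

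The main obstacle, which has kept Conjecture~\ref{Dicks-Ventura} open since \cite{Dicks}, is that the pulled-back self-map on $\Gamma_K$ is typically neither locally injective nor fold-preserving, and the folding needed to realise it can absorb loops of the fibre product in an uncontrolled way. No combinatorial characterisation of the Stallings graphs $\Gamma_R$ arising from retracts is known, so purely graph-theoretic control of the fibre product tends to be too weak. Extending the paper's $\textrm{rk}(K)\le 3$ result to arbitrary rank will likely require a new invariant that couples the retraction $r$ to the subgroup $K$ in a way that is stable under folding, since at rank $4$ and above several Whitehead-type reductions may have to be carried out simultaneously to realise the required rank bound.
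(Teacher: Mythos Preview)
The statement you have been asked to prove is Conjecture~\ref{Dicks-Ventura}, which is \emph{open}: the paper does not prove it either. What you have written is not a proof but a research outline, and you yourself identify the gap correctly. The pulled-back self-map on $\Gamma_K$ is not a graph immersion, folding destroys control over the fixed subgraph, and no Bestvina--Handel style theorem is available for arbitrary (non-injective) combinatorial self-maps of graphs. So there is no proof here to assess for correctness; there is only a plausible line of attack together with an honest account of why it stalls.

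It is still worth comparing your line of attack with what the paper actually does toward the conjecture. The paper's partial result (Theorem~\ref{D-V}, the case $\textrm{rk}(H)\le 3$) is obtained by a completely different mechanism from the Stallings/train-track picture you sketch. The paper works in the pro-$p$ topology: for any finitely generated $H$ not contained in $R$, the closure $\textrm{cl}(H)$ in the pro-$p$ topology satisfies $\textrm{rk}(\textrm{cl}(H)\cap R)<\textrm{rk}(H)$ (Lemma~\ref{lem D-V}), because in the pro-$p$ completion $\overline{R}$ is a free factor and $\overline{H}\cap\overline{R}$ is a \emph{proper} free factor of $\overline{H}$. One then writes $H\cap R = H\cap(\textrm{cl}(H)\cap R)$ and applies the Hanna Neumann bound; when $\textrm{rk}(H)\le 3$ the subgroup $\textrm{cl}(H)\cap R$ has rank at most $2$, and Tardos' rank-two case of Hanna Neumann gives $\textrm{rk}(H\cap R)\le\textrm{rk}(H)$. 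Thus the paper trades your combinatorial fixed-subgraph problem for a pro-$p$ free-factor argument plus a known intersection bound. Your approach, if it could be made to work, would be more direct and would not need the pro-$p$ detour; but as you note, the folding obstruction is exactly what has kept the full conjecture open.
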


Since the rank of a retract of $F_n$ is at most $n$ (in fact, every proper retract of $F_n$ has rank smaller than $n$), 
it follows that a positive answer to Bergman's question would imply the Dicks-Ventura conjecture.
(For a comprehensive history of the theory of fixed subgroups of endomorphisms of free groups, we refer the reader to the survey paper \cite{Ventura}.)

\medskip

In this paper, we answer the question of Bergman.
 
\begin{thmx}
\label{main thm}
\begin{enumerate}[(i)]
\item Let $H$ be a subgroup of $F_n$ of rank two, and let $R$ be a retract of $F_n$. Then $H \cap R$ is a retract of $H$.
\item For every $m \geq 3$ and every $1 \leq k \leq n-1$, there exist a subgroup $H$ of $F_n$ of rank $m$ and a retract $R$ of $F_n$ of rank $k$ such that 
$H \cap R$ is not a retract of $H$. 
\end{enumerate}
\end{thmx}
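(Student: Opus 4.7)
The plan for part (i) hinges on the rigidity of retracts of $F_2$, which are exactly $\{1\}$, $F_2$, and cyclic subgroups generated by primitive elements. Let $r \colon F_n \to R$ be the retraction and set $K = H \cap R$. By the inertia of retracts for rank-$2$ subgroups established in this paper (as part of the work on the Dicks--Ventura conjecture), we have $\textrm{rk}(K) \leq 2$. Moreover, retracts of $F_n$ are root-closed: since $r(u)^k = u^k$ combined with uniqueness of roots in free groups forces $r(u) = u$, every retract is closed under taking roots, and hence $K$ is root-closed in $H$.

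The subcases $K = \{1\}$ and $K = H$ are immediate. The main subcase is $\textrm{rk}(K) = 1$, with $K = \langle w \rangle$; the goal is to show $w$ is primitive in $H$. The idea is to view $r|_H \colon H \to R$ as a homomorphism fixing $w$ pointwise, then analyze the Stallings graph $\Gamma_H$ together with the cycle traced by $w$: the retraction structure constrains this cycle so that, after a Whitehead-type reduction, $w$ corresponds to a basis element of $H$. The subcase $\textrm{rk}(K) = 2$ with $K \neq H$ is excluded by combining the retraction structure with the rigidity of $F_2$; precisely, the retraction $r$ restricted to $H$ provides an endomorphism-like datum whose fixed subgroup cannot be a proper rank-$2$ subgroup of $H \cong F_2$. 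The primitivity step in the cyclic subcase is the main obstacle.

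For part (ii), the plan is to construct explicit counterexamples via a Turner-type realization of non-retract subgroups as intersections. Concretely, select $K \leq F_m$ that fails to be a retract of $F_m$; for example, when $m = 3$ one may take $K = \langle x_1, [x_2, x_3] \rangle \leq F_3$, which fails to be a retract because $[x_2, x_3]$ has trivial abelianization and thus cannot be realized as a commutator of two elements of the rank-$2$ subgroup $K$. One then embeds $F_m$ as $H$ inside a suitable $F_n$ and, through an explicit idempotent endomorphism, constructs a retract $R$ of rank exactly $k$ such that $R \cap H = K$. Scaling to arbitrary $m \geq 3$ and $1 \leq k \leq n - 1$ follows by taking suitable free products and modifying the idempotent. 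The main obstacle is choreographing the construction so that $R$ has the prescribed rank $k$ while the intersection $R \cap H$ recovers the non-retract $K$ precisely.
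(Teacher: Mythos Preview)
Your proposal has genuine gaps in both parts.

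\textbf{Part (i).} Your reduction is sound: after invoking the rank bound and root-closure, the problem splits into showing (a) that $\mathrm{rk}(H\cap R)=2$ forces $H\cap R=H$, and (b) that if $H\cap R=\langle w\rangle$ is cyclic then $w$ is primitive in $H$. But you do not actually prove either. For (b), root-closure only tells you $w$ is not a proper power in $H$; this is strictly weaker than primitivity (e.g.\ $x^2y^2\in F_2$ is root-closed but not primitive). Your ``Stallings graph plus Whitehead reduction'' sketch contains no mechanism linking the ambient retraction $r\colon F_n\to R$ to the combinatorics of $\Gamma_H$, and you yourself flag this as the main obstacle. For (a), $r|_H$ is a map $H\to R$, not an endomorphism of $H$, so fixed-subgroup results for endomorphisms of $F_2$ do not apply directly. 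The paper's proof is quite different: it pivots on test elements. If $H\cap R$ contains no test element of $H$, Turner's theorem covers it by proper (hence cyclic) retracts of $H$, and the key Lemma~\ref{first step} forces $H\cap R$ into a single such cyclic retract. If $H\cap R$ does contain a test element $u$ of $H$, then by Theorem~\ref{pro-p basic}(iii) there is a prime $p$ for which $u$ is a test element of $\widehat H_p$; a pro-$p$ free-factor argument (Proposition~\ref{retracts in pro-p}) then forces $H\le R$. The pro-$p$ input is essential precisely because it supplies the bridge from ``$u\in R$'' to ``$H\le R$'' that your outline lacks.

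\textbf{Part (ii).} Exhibiting a subgroup $K\le H$ that is not a retract of $H$ is easy; the entire content of (ii) is arranging that $K$ arise as $H\cap R$ for a retract $R$ of $F_n$. Your proposal asserts that an ``explicit idempotent endomorphism'' does this, but gives no construction. There is no general procedure that, given an arbitrary non-retract $K\le H$, manufactures such an $R$; indeed if $H$ is a free factor of $F_n$ there is no reason to expect your $K=\langle x_1,[x_2,x_3]\rangle$ to be realizable this way. (Also, your justification that $K$ is not a retract is garbled: the point is that $[x_2,x_3]$ is a basis element of $K$, hence has \emph{nontrivial} image in $K^{ab}$, so cannot be a commutator in $K$.) The paper instead works inside $F_2$: it builds explicit degree-$m$ covers $\Gamma_m\to\Gamma$, takes $H_m=\pi_1(\Gamma_m)$, and shows that for the visible element $w_k=x[x,y]^k$ (so $R=\langle w_k\rangle$ is a cyclic retract of $F_2$) one has $w_k^2\in H_{2k+1}$ but $w_k^2$ is not visible in $H_{2k+1}$, whence $H_{2k+1}\cap R=\langle w_k^2\rangle$ is not a retract. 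The general $(m,k,n)$ case follows by free-product tricks. The covering-space computation, not an abstract realization lemma, is what makes the argument work.
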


In contrast to the negative result of Theorem~\ref{main thm} $(ii)$, our next theorem provides further evidence for the Dicks-Ventura conjecture.

\begin{thmx}
\label{D-V}
Let $R$ be a retract of $F_n$, and let $H$ be a subgroup of $F_n$ such that $\textrm{rk}(H) \leq 3$. Then $\textrm{rk}(H \cap R) \leq \textrm{rk}(H)$. 
\end{thmx}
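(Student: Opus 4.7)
The cases $\textrm{rk}(H)\le 2$ are immediate: every subgroup of an infinite cyclic group is cyclic, and for $\textrm{rk}(H)=2$, Theorem~\ref{main thm}$(i)$ says $H\cap R$ is a retract of $H\cong F_2$, hence of rank at most $2$. The substance of Theorem~B is therefore the case $\textrm{rk}(H)=3$.

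The plan is to argue by contradiction. Assume $\textrm{rk}(H)=3$ and $L:=H\cap R$ has rank at least $4$. I would aim to produce a rank-$2$ subgroup $K\le H$ with $\textrm{rk}(K\cap R)\ge 3$; this yields the desired contradiction, because $K\cap R=K\cap L$ (using $K\le H$ and $L=H\cap R$) and because by Theorem~\ref{main thm}$(i)$ applied to $K\cong F_2$, $K\cap R$ must be a retract of $K$ and so of rank at most $2$.

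To construct $K$ I would use Theorem~\ref{main thm}$(i)$ as a local tool. For any pair $g_1,g_2\in H$, the intersection $\langle g_1,g_2\rangle\cap R$ is a retract of $\langle g_1,g_2\rangle$, and hence is trivial, cyclic on a primitive of $\langle g_1,g_2\rangle$, or equal to $\langle g_1,g_2\rangle$. Applied to pairs $(l_i,h)$ where $l_1,\ldots,l_k$ is a basis of $L$ (with $k\ge 4$) and $h\in H\setminus L$ is arbitrary, this forces $\langle l_i,h\rangle\cap R$ to be a proper cyclic subgroup of $\langle l_i,h\rangle$ generated by a primitive element of which $l_i$ is a power. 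Transferring these constraints to the Stallings graph $\Gamma_H$ of $H$ and the immersion $\Gamma_L\to\Gamma_H$, an Euler-characteristic/pigeonhole argument should then locate a proper connected subgraph $\Delta\subsetneq\Gamma_H$ with $\chi(\Delta)=-1$ that carries at least three independent loops of $\Gamma_L$; the rank-$2$ subgroup $K\le H$ corresponding to $\pi_1(\Delta)$ then satisfies $\textrm{rk}(K\cap L)\ge 3$, giving the contradiction.

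The main obstacle is executing this last pigeonhole rigorously, because a purely numerical argument cannot suffice: for instance, $L_0=\langle a,b,cac^{-1},cbc^{-1}\rangle$ is a rank-$4$ subgroup of $F_3=\langle a,b,c\rangle$ for which the existence of such a $K$ is far from obvious. Crucially, this $L_0$ cannot arise as $H\cap R$ for any retract $R$ of an ambient $F_n$: if $r\colon F_n\to R$ is the retraction and $a,b,cac^{-1},cbc^{-1}\in R$, then $r(c)ar(c)^{-1}=cac^{-1}$ and $r(c)br(c)^{-1}=cbc^{-1}$, and the cyclicity of centralizers in free groups then forces $c^{-1}r(c)=1$, whence $c\in R$ and $L_0$ is strictly smaller than $H\cap R$. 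Converting these ``forced closure'' obstructions of retracts into a uniform combinatorial argument on Stallings graphs is where the substantive work will lie.
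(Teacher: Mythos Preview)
Your proposal is a plan rather than a proof, and you correctly identify the gap yourself: the existence of a rank-$2$ subgroup $K\le H$ with $\textrm{rk}(K\cap L)\ge 3$ is never established. The Stallings-graph/Euler-characteristic heuristic cannot work for arbitrary $L$, as your own example $L_0=\langle a,b,cac^{-1},cbc^{-1}\rangle\le F_3$ shows: for this $L_0$ no such $K$ exists. Your argument that this particular $L_0$ cannot equal $H\cap R$ for a retract $R$ is valid, but it does not supply a mechanism for the general case. What you would need is a proof that \emph{every} rank-$\ge 4$ subgroup $L\le H$ for which no witnessing $K$ exists is excluded from being $H\cap R$; translating the retract hypothesis into combinatorial constraints on the immersion $\Gamma_L\to\Gamma_H$ sufficient to force such a $K$ is not a known result, and there is no indication that it is any easier than the theorem itself.

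The paper takes a completely different route and sidesteps this combinatorial difficulty. Fix a prime $p$ and let $\textrm{cl}(H)$ denote the closure of $H$ in the pro-$p$ topology on $F_n$. Using that $\overline{R}$ is a free factor of $\widehat{F}_{n,p}$ and that closures of $p$-closed subgroups intersect correctly, one shows that if $H\not\le R$ then $\textrm{rk}(\textrm{cl}(H)\cap R)<\textrm{rk}(H)\le 3$. Since $H\cap R=H\cap(\textrm{cl}(H)\cap R)$ and $\textrm{cl}(H)\cap R$ has rank at most $2$, the rank-$2$ case of the Hanna Neumann conjecture (Tardos) yields $\textrm{rk}(H\cap R)\le\textrm{rk}(H)$. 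In effect the ``good rank-$2$ subgroup'' you seek is provided as $\textrm{cl}(H)\cap R$, sitting in $F_n$ rather than in $H$, and the final intersection bound comes from Hanna Neumann rather than from Theorem~\ref{main thm}$(i)$.
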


As a consequence of this theorem, we get the following
  
\begin{corx}
\label{cor D-V}
Let $S$ be a family of endomorphisms of $F_n$, and let $H$ be a subgroup of $F_n$ with $\textrm{rk}(H) \leq 3$. Then $\textrm{rk}(H \cap \textrm{Fix}(S)) \leq \textrm{rk}(H)$. 
\end{corx}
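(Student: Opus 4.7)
The plan is to derive the corollary from Theorem~\ref{D-V} by combining it, via Turner's classical reduction, with the Dicks-Ventura inertia theorem for families of injective endomorphisms cited in the introduction.

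I would first treat the case of a single endomorphism $\varphi \in \mathrm{End}(F_n)$. Following Turner, the descending chain $F_n \supseteq \varphi(F_n) \supseteq \varphi^2(F_n) \supseteq \cdots$ has ranks bounded by $n$, so it stabilizes at some $R := \varphi^N(F_n)$. The restriction $\alpha := \varphi|_R$ is a surjective endomorphism of the finitely generated free group $R$, hence an automorphism of $R$; the map $\alpha^{-N} \circ \varphi^N \colon F_n \to R$ is then a retraction, so $R$ is a retract of $F_n$ and $\mathrm{Fix}(\varphi) = \mathrm{Fix}_R(\alpha)$. Writing $H \cap \mathrm{Fix}(\varphi) = (H \cap R) \cap \mathrm{Fix}(\alpha)$, Theorem~\ref{D-V} bounds $\mathrm{rk}(H \cap R) \leq \mathrm{rk}(H) \leq 3$, and the Dicks-Ventura inertia theorem applied to $\alpha \in \mathrm{Aut}(R)$ bounds $\mathrm{rk}((H \cap R) \cap \mathrm{Fix}(\alpha)) \leq \mathrm{rk}(H \cap R)$. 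Combining yields $\mathrm{rk}(H \cap \mathrm{Fix}(\varphi)) \leq \mathrm{rk}(H)$, as desired. The case of a finite family follows by iteration on $H_i := H \cap \mathrm{Fix}(\varphi_1) \cap \cdots \cap \mathrm{Fix}(\varphi_i)$, each of rank $\leq \mathrm{rk}(H) \leq 3$, applying the single-endomorphism argument to $H_{i-1}$ and $\varphi_i$.

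For an arbitrary family $S$, I would pass through a common retract: let $R_\varphi$ denote the Turner retract associated to each $\varphi \in S$, and let $R := \bigcap_{\varphi \in S} R_\varphi$, which is a retract of $F_n$ by Theorem~\ref{intersection of retracts}. Since $\mathrm{Fix}(\varphi) \subseteq R_\varphi$ for each $\varphi$, we have $\mathrm{Fix}(S) \subseteq R$, so $H \cap \mathrm{Fix}(S) = (H \cap R) \cap \mathrm{Fix}(S)$, and Theorem~\ref{D-V} bounds $\mathrm{rk}(H \cap R) \leq \mathrm{rk}(H)$. It remains to bound $\mathrm{rk}((H \cap R) \cap \mathrm{Fix}(S))$ by $\mathrm{rk}(H \cap R)$ inside $R$; since $\mathrm{Fix}(S) = \bigcap_\varphi \bigl(\mathrm{Fix}(\alpha_\varphi) \cap R\bigr)$, this is a family-level inertia statement to be obtained by invoking the Dicks-Ventura theorem for families of injective endomorphisms on the automorphisms $\alpha_\varphi$. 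The main obstacle is that the $\alpha_\varphi$ act on the different retracts $R_\varphi$ rather than on the common $R$, so one cannot immediately treat $\{\alpha_\varphi\}$ as a family of injective endomorphisms of a single ambient group; resolving this requires careful tracking of fixed-point information under the inclusions $R \subseteq R_\varphi$, together with the intersection-of-retracts theorem, in order to bring the known family-level inertia result to bear on $\mathrm{Fix}(S) \cap R$ inside $R$.
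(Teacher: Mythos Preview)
Your treatment of a single endomorphism follows the paper's approach (Turner's retract combined with the Dicks--Ventura inertia theorem for automorphisms), but one step is wrong as stated: the chain $F_n \supseteq \varphi(F_n) \supseteq \varphi^2(F_n) \supseteq \cdots$ need \emph{not} stabilise---take $\varphi(a)=a^2$ on $F_1$. Bounded rank does not give a descending chain condition on subgroups of a free group. The correct object is the stable image $\varphi^{\infty}(F_n)=\bigcap_{k\geq 1}\varphi^k(F_n)$: the paper cites \cite{Turner} for the fact that this is a retract of $F_n$ and \cite{Imrich-Turner} for the fact that $\varphi$ restricts to an automorphism of it, with $\mathrm{Fix}(\varphi)=\mathrm{Fix}(\varphi|_{\varphi^{\infty}(F_n)})$. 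With that correction your single-$\varphi$ argument coincides with the paper's.

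For an arbitrary family $S$ you take an unnecessary detour. The paper simply packages the desired conclusion as a property of the subgroup (``$3$-inert'': $\mathrm{rk}(K\cap A)\le\mathrm{rk}(K)$ for every $K$ of rank $\le 3$) and observes that this property is closed under intersections; since $\mathrm{Fix}(S)=\bigcap_{\varphi\in S}\mathrm{Fix}(\varphi)$, this reduces the corollary to the single-endomorphism case in one line. The closure under intersections is exactly the iteration you already carried out for finite $S$ (replace $K$ by $K\cap\mathrm{Fix}(\varphi_1)$, which again has rank $\le 3$, and repeat), together with the transitivity you implicitly used. There is no need to pass through a common retract $R=\bigcap_{\varphi} R_\varphi$, and the obstacle you flag---that the automorphisms $\alpha_\varphi$ act on different retracts $R_\varphi$ rather than on $R$---is an artefact of that detour, not a genuine difficulty in the problem.
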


In Section~\ref{rank two}, we prove Theorem~\ref{main thm} $(i)$. The proof uses recent results on test elements of free pro-$p$ groups; the main step is a result reminiscent of the Prime Avoidance Lemma from commutative algebra (see Lemma~\ref{first step}).
After some preliminary `positive' results on visible elements (which, we believe, are of independent interest), in Section~\ref{higher rank}, we complete the proof of Theorem~\ref{main thm}. The arguments in this section have a more geometric flavor.
Section~\ref{Dicks-Ventura} is dedicated to the proof of Theorem~\ref{D-V}; here we use pro-$p$ techniques and  the Hanna Neumann conjecture (proved in 2011 independently by Friedman and Mineyev ).
\section{When $H$ is of rank two}
\label{rank two}

An element $g$ of a group $G$ is called a test element if every endomorphism of $G$ that fixes $g$ is an automorphism.
Let $x_1, x_2, \ldots, x_n$ be a basis of $F_n$; then
\begin{itemize}
\item the commutator $[x_1, x_2]$ is a test element of $F_2$ (Nielsen, \cite{Nielsen});
\item $[x_1, x_2][x_3,x_4]\cdots [x_{2m-1}, x_{2m}]$ is a test element of $F_{2m}$ (Zieschang, \cite{Zieschang1});
\item every higher commutator of weight $n$ (with arbitrary disposition of commutator brackets) involving all $n$ letters $x_1,x_2, \ldots, x_n$ is a test element of $F_n$ (Rips,  \cite{Rips});
\item $x_1^{k_1}x_2^{k_2}\cdots x_n^{k_n}$  is a test element of $F_n$ if and only if $k_i \neq 0$ for all $1 \leq i \leq n$ and $\gcd(k_1, \ldots, k_n) \neq 1$ (Turner, \cite{Turner});
\item the set of test elements of $F_n$ forms a net in the Cayley graph of $F_n$ (\cite{SnoTan1}).
\end{itemize} 

\medskip

There is a close connection between retracts and test elements of free groups.

\begin{thm}[Turner, \cite{Turner}]
\label{Turner}
An element $w \in F_n$ is a test element if and only if it does not belong to a proper retract of $F_n$.
\end{thm}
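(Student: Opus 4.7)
The theorem is an ``iff'' with two very different directions; one is essentially formal, the other requires a genuine construction.

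The $(\Leftarrow)$ direction is quick. Suppose $w$ lies in a proper retract $R \subsetneq F_n$ with retraction $\rho: F_n \to R$. Composing with the inclusion $R \hookrightarrow F_n$, we view $\rho$ as an endomorphism of $F_n$; it fixes $w$ (since $\rho$ restricts to the identity on $R$ and $w \in R$) but is not surjective, its image being the proper subgroup $R$. Hence $\rho$ is not an automorphism, and $w$ is not a test element.

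For the $(\Rightarrow)$ direction---the substantive content---suppose $w$ is not a test element, so there is an endomorphism $\phi$ of $F_n$ fixing $w$ that is not an automorphism. By the Hopf property for finitely generated free groups, $\phi$ is not surjective. The plan is to iterate $\phi$ and extract a proper retract containing $w$ from its asymptotic behaviour. Consider the descending chain
\[ F_n \supseteq \phi(F_n) \supseteq \phi^2(F_n) \supseteq \cdots . \]
Each $\phi^{k+1}(F_n)$ is a homomorphic image of $\phi^k(F_n)$, so the ranks form a non-increasing sequence and stabilize at some $r$. Once the rank is stable, $\phi$ restricts to a surjection between two free groups both of rank $r$, which the Hopf property for $F_r$ promotes to an isomorphism. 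So, for $k$ large, $\phi$ is injective on $\phi^k(F_n)$.

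In the model case where the chain stabilizes as subgroups, i.e.\ $\phi^k(F_n) = \phi^{k+1}(F_n) =: L$ for some $k$, the map $\phi|_L$ is a surjective endomorphism of a finitely generated free group, hence an automorphism. Letting $\psi = (\phi|_L)^{-1}$, the composition $\psi^k \circ \phi^k : F_n \to L$ is a retraction (it sends any $x \in L$ to $\psi^k(\phi^k(x)) = x$), the element $w = \phi^k(w)$ lies in $L$, and $L \neq F_n$ (otherwise $\phi^k$, and with it $\phi$, would be surjective, contradicting our assumption). So $L$ is the desired proper retract.

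The main obstacle is that the chain $\phi^k(F_n)$ need not stabilize in general---for instance $x_i \mapsto x_i^2$ on $F_n$ gives strictly shrinking images---so the retract cannot always be taken of the form $\phi^k(F_n)$. To treat the general situation I would pass to the intersection $R = \bigcap_k \phi^k(F_n)$, which is finitely generated by Takahasi's theorem on descending chains of bounded-rank subgroups of a free group, and which contains $w$. Using the injectivity of $\phi$ on $\phi^k(F_n)$ in the stable-rank regime together with a uniqueness-of-preimages argument, one shows that $\phi$ restricts to an automorphism of $R$. The delicate remaining step is to promote this automorphism to an actual retraction $F_n \to R$; this is the technical heart of Turner's argument and is where I would expect to spend most of the effort.
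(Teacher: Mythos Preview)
The paper does not prove this theorem; it is quoted from Turner and used as a tool (and indeed the key step you isolate---that $\varphi^{\infty}(F_n)=\bigcap_k \varphi^k(F_n)$ is a retract---is invoked again, still as a black box, in the paper's proof of Corollary~\ref{cor D-V}). So there is no in-paper argument to compare against; I assess your proposal on its own and against Turner's original.

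A bookkeeping point first: your direction labels are reversed. The statement is ``$w$ test $\iff$ $w$ lies in no proper retract''. What you call $(\Leftarrow)$---``$w$ in a proper retract $\Rightarrow$ $w$ not test''---is the contrapositive of the forward implication $(\Rightarrow)$, and conversely. The content is unaffected.

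Your easy direction is correct.

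For the hard direction your outline is exactly Turner's strategy: iterate $\phi$, observe that the ranks $\mathrm{rk}(\phi^k(F_n))$ are non-increasing and hence stabilize at some $r$, deduce that $\phi$ is injective on $\phi^N(F_n)$ for large $N$, pass to the stable image $R=\bigcap_k \phi^k(F_n)$, check that $\phi|_R$ is an automorphism of $R$, and then manufacture a retraction $F_n\to R$. You rightly identify this last step as the crux and leave it open. That step is precisely the main theorem of Turner's paper, so your proposal is an honest outline of his proof with the principal lemma still to be supplied; it is not yet a complete proof, as you yourself say.

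One caution on the sketch: the finite generation of $R$ does not follow from ``Takahasi's theorem on descending chains''. Takahasi's theorem concerns \emph{ascending} chains of bounded rank (and is the fact the present paper uses at the end of the proof of Lemma~\ref{first step}). A descending chain of bounded-rank subgroups can have infinite-rank intersection in general; that $R$ here has rank exactly $r$ is part of what Turner actually proves, using the injectivity of $\phi$ on the stable range, and it is bound up with the construction of the retraction rather than being a preliminary consequence of a named theorem.
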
   

Let $p$ be a prime. A pro-$p$ group is a compact Hausdorff topological group whose open subgroups form a base for the neighborhoods of the identity and every open normal subgroup has index a power of $p$.
Equivalently, a pro-$p$ group is an inverse limit of an inverse system of finite $p$-groups.

Given  a discrete group $G$, the pro-$p$ completion $ \widehat{G}_{p}$  of $G$ is defined as the inverse limit of the (obvious) inverse system formed by the finite quotients $G/N$, where $N$ runs through the normal subgroups of $G$ of index a finite power of $p$. 
There is a natural homomorphism $\jmath_{p}:G \to \widehat{G}_{p}$ determined by the projections $G \to G/N$. If $G$ is residually finite-$p$, then $\jmath_p$ is an embedding and we identify $\jmath_{p}(G)$ with $G$. 
The pro-$p$ completion $\widehat{F}_{n,p}$ of $F_n$ is a free pro-$p$ group. Since free groups are residually finite-$p$, $F_n \leq \widehat{F}_{n,p}$ and every basis of $F_n$ is a basis of $\widehat{F}_{n,p}$ (as a free pro-$p$ group). 
We refer the reader to {\cite[Section~3.2 and Section~3.3]{Ribes1}} for more details on pro-$p$ completions and free pro-$p$ groups.


In \cite{SnoTan2}, test elements in pro-$p$ groups were studied. In particular, the following results were proved.

\begin{thm}
\label{pro-p basic}
\begin{enumerate}[(i)]
\item \cite[Corollary~3.6]{SnoTan2} Let $p$ be a prime. Then $w \in \widehat{F}_{n,p}$ is a test element of $\widehat{F}_{n,p}$ if and only if it is not contained in a proper free factor of $\widehat{F}_{n,p}$.
\item \cite[Corollary~7.2]{SnoTan2} If $w \in F_n$ is a test element of $\widehat{F}_{n,p}$ for some prime $p$, then it is a test element of $F_n$.
\item \cite[Proposition~7.6]{SnoTan2} $w \in F_2$ is a test element of $F_2$ if and only if it is a test element of $\widehat{F}_{2,p}$ for some prime $p$. 
\item \cite[Proposition~5.13 $(b)$ and Proposition~5.10 $(b)$]{SnoTan2} Let $p$ be any prime and let $x_1, x_2$ be a basis of $\widehat{F}_{2,p}$. Then $[x_1, x_2]$ is a test element of $\widehat{F}_{2,p}$.
\end{enumerate} 
\end{thm}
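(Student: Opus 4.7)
The plan is to address the four statements in the order (i), (iv), (ii), (iii): the first is the technical heart (a pro-$p$ Turner-type theorem), the second is a short application of the first, while (ii) and (iii) are transfer results between $F_n$ and its pro-$p$ completion that rest on (i) together with Turner's Theorem~\ref{Turner}.

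For (i), the forward direction is immediate: if $w$ lies in a proper free factor $K$ of $\widehat{F}_{n,p}$, one writes $\widehat{F}_{n,p}=K\amalg L$ as a free pro-$p$ product and takes the retraction onto $K$ killing $L$, which fixes $w$ but is not surjective, hence not an automorphism. For the converse, I would start with a non-automorphism $\varphi$ fixing $w$. Because $\widehat{F}_{n,p}$ is a finitely generated profinite group, $\varphi$ is not surjective, so the descending sequence of closed subgroups $\varphi^k(\widehat{F}_{n,p})$ stabilizes at some closed subgroup $R$ containing $w$, on which $\varphi$ restricts to a surjection. Composing enough iterates of $\varphi$ with the inverse of $\varphi|_R$ produces a retraction onto $R$, so $R$ is a proper retract of $\widehat{F}_{n,p}$ containing $w$. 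The key ingredient is the pro-$p$ fact that every retract of a free pro-$p$ group of finite rank is a free factor, which then supplies the desired proper free factor.

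Part (iv) follows quickly from (i): a proper free factor of $\widehat{F}_{2,p}$ is procyclic, generated by a primitive $u$, so if $[x_1,x_2]=u^{\alpha}$ for some $\alpha\in\mathbb{Z}_p$ then in the abelianization $\mathbb{Z}_p^{2}$ one has $0=\alpha\bar{u}$; since $\bar{u}$ is a $\mathbb{Z}_p$-basis vector, $\alpha=0$, contradicting $[x_1,x_2]\neq 1$. For (ii), suppose $w\in F_n$ is a test element of $\widehat{F}_{n,p}$ but not of $F_n$; by Theorem~\ref{Turner}, $w$ lies in a proper retract $R\le F_n$. The pro-$p$ closure $\overline{R}$ is a retract of $\widehat{F}_{n,p}$ (the discrete retraction extends continuously) of rank $\rank(R)<n$, hence a proper free factor by (i), contradicting that $w$ is a test element of $\widehat{F}_{n,p}$.

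Part (iii) is the most delicate, and I expect it to be the main obstacle. One direction is (ii) with $n=2$. For the converse, one assumes $w\in F_2$ is a test element of $F_2$, i.e., by Theorem~\ref{Turner}, not a power of any primitive of $F_2$, and seeks a prime $p$ for which $w$ is likewise not a $\mathbb{Z}_p$-power of any primitive of $\widehat{F}_{2,p}$; invoking (i) then concludes. The difficulty is that $\widehat{F}_{2,p}$ may carry primitives that do not come from $F_2$, so the discrete hypothesis does not translate directly. The strategy would be to show, using the structure of primitives of $\widehat{F}_{2,p}$ together with an intersection-with-$F_2$ argument, that if $w$ were a pro-$p$ power of a primitive of $\widehat{F}_{2,p}$ for every prime $p$, then the procyclic retracts involved would descend to a cyclic retract of $F_2$ containing $w$, contradicting the hypothesis.
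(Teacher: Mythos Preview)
The paper does not prove Theorem~\ref{pro-p basic}: each of the four parts is quoted verbatim from \cite{SnoTan2} with a pinpoint citation, and no argument is supplied here. So there is no ``paper's own proof'' against which to compare your attempt.

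Evaluating your sketch on its own merits, there is a genuine gap in part~(i). You assert that for a non-automorphism $\varphi$ of $\widehat{F}_{n,p}$ the descending chain $\varphi^k(\widehat{F}_{n,p})$ stabilizes; this is false. Take $\widehat{F}_{2,p}$ with basis $x,y$ and define $\varphi(x)=x$, $\varphi(y)=y^{p}$. Then $\varphi$ is not surjective, it fixes $x$, yet $\varphi^{k}(\widehat{F}_{2,p})=\overline{\langle x,\,y^{p^{k}}\rangle}$ is a strictly descending chain (project onto $\Z_p$ via $x\mapsto 0$, $y\mapsto 1$ to see $y^{p^{k-1}}\notin\varphi^{k}(\widehat{F}_{2,p})$). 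Unlike the discrete Turner argument, rank stabilization does not force subgroup stabilization in the pro-$p$ world, because an injective endomorphism of a free pro-$p$ group of finite rank need not be surjective. Passing to $R=\bigcap_{k}\varphi^{k}(\widehat{F}_{n,p})$ is the natural repair, but then you must still show that $R$ is a proper retract of $\widehat{F}_{n,p}$, and nothing in your outline does this. Your arguments for (ii) and (iv) are sound once (i) is in hand; for (iii) you correctly isolate the obstacle but give only a heuristic, not a proof, of the hard direction.
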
 




\begin{rem}
Theorem~\ref{pro-p basic} $(iii)$ does not extend to free groups of higher rank. Indeed, it was proved in \cite{SnoTan3} that for each $n \geq 3$, there are test elements of $F_n$ that are not test elements of $\widehat{F}_{n,p}$ for any prime $p$.
\end{rem}

In the following proposition, we collect a few basic facts on retracts of free pro-$p$ groups that will be used several times in the ensuing arguments.

\begin{pro} 
\label{retracts in pro-p}
Let $p$ be a prime.
\begin{enumerate}[(i)]
\item \cite[Corollary~3.6 and Proposition~7.1]{SnoTan2} If $R$ is a retract of $F_n$, then $\overline{R}$  (the closure of $R$ in $\widehat{F}_{n,p}$) is a free factor of $\widehat{F}_{n,p}$.
\item \cite[Lemma~3.7]{LubotzkyA1} If $H$ is a (topologically) finitely generated closed subgroup of $\widehat{F}_{n,p}$ and $K$ is a free factor of $\widehat{F}_{n,p}$, then
$H \cap K$ is a free factor of $H$.
\end{enumerate}
\end{pro}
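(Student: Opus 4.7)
The plan is to treat the two parts in sequence, using the same underlying tool---Frattini quotients---in each.

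For part $(i)$, I would first transfer the retraction to the pro-$p$ completion. Given $r : F_n \to R$, the universal property of pro-$p$ completion yields a continuous homomorphism $\hat r : \widehat{F}_{n,p} \to \widehat{R}_p$. Because $R$ is a finitely generated free group it is residually finite-$p$, and since $\hat r|_R = r$ is the identity, continuity and the density of $R$ in $\overline R$ imply that $\overline R$ is naturally identified with $\widehat R_p$ and that $\hat r$ restricts to the identity on $\overline R$. Thus $\overline R$ is a (continuous) retract of $\widehat{F}_{n,p}$.

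The main step is then to upgrade ``retract'' to ``free factor'' inside a free pro-$p$ group. I would pass to the Frattini quotient $V := \widehat{F}_{n,p}/\Phi(\widehat{F}_{n,p}) \cong \mathbb{F}_p^{\,n}$. Any continuous endomorphism of a pro-$p$ group carries the Frattini subgroup into the Frattini subgroup, so $\hat r$ induces an $\mathbb{F}_p$-linear retraction $\bar r : V \to \overline R/\Phi(\overline R)$; in particular $\Phi(\overline R) = \overline R \cap \Phi(\widehat{F}_{n,p})$ and $\overline R/\Phi(\overline R)$ is a direct summand of $V$. Lifting a basis of $\overline R/\Phi(\overline R)$ to a minimal topological generating set of $\overline R$ and extending by lifts of a complementary $\mathbb{F}_p$-basis of $V$ produces a minimal topological generating set of $\widehat{F}_{n,p}$. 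Since any minimal topological generating set of a free pro-$p$ group is a (pro-$p$) basis, this displays $\overline R$ as a free factor.

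For part $(ii)$, the same Frattini philosophy guides the argument, but applied inside $H$. Writing $\widehat{F}_{n,p} = K \amalg L$ for some closed $L$ and taking the associated continuous retraction $\pi : \widehat{F}_{n,p} \to K$, the idea is to compare $H/\Phi(H)$ with $(H \cap K)/\Phi(H \cap K)$. One needs two things: first that $\Phi(H \cap K) = (H \cap K) \cap \Phi(H)$, and second that $(H \cap K)/\Phi(H\cap K)$ is a direct summand of $H/\Phi(H)$, so that bases can be lifted and completed to yield a topological free-factor decomposition of $H$. The delicate point, which I expect to be the main obstacle, is that $\pi|_H$ does not land inside $H$, so one cannot simply use $\pi$ to restrict and obtain a retraction of $H$ onto $H \cap K$; instead, one must work entirely in the Frattini quotient of $H$ and invoke the finite generation of $H$ to ensure that the lifts of a basis of $(H \cap K)/\Phi(H \cap K)$ together with lifts of a complement actually topologically generate $H$ rather than some proper closed subgroup. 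This is precisely where the hypothesis that $H$ is finitely generated is used; the construction then produces the required free factor decomposition of $H$.
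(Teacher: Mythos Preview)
The paper does not give its own proof of this proposition; it simply records two cited facts from \cite{SnoTan2} and \cite{LubotzkyA1}. So there is no ``paper's proof'' to compare against, and your proposal should be judged on its own merits.

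Your argument for part $(i)$ is sound. The identification $\overline R \cong \widehat R_p$ is justified precisely because $R$ is a retract (so the pro-$p$ topology of $F_n$ restricts to the full pro-$p$ topology on $R$), and the Frattini-quotient argument correctly upgrades a continuous retract of a finitely generated free pro-$p$ group to a free factor: once $\overline R/\Phi(\overline R)$ is a direct summand of $V$, lifts of a basis of the summand lie in $\overline R$, topologically generate $\overline R$, and extend to a minimal generating set (hence a basis) of $\widehat F_{n,p}$.

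Part $(ii)$, however, has a real gap. You correctly identify that the whole argument hinges on showing that the natural map
\[
(H\cap K)/\Phi(H\cap K)\;\longrightarrow\; H/\Phi(H)
\]
is injective (equivalently, that $\Phi(H\cap K) = (H\cap K)\cap\Phi(H)$), but you give no mechanism to establish this. In part $(i)$ the injectivity came for free from the retraction $\hat r$, which carried $\Phi(\widehat F_{n,p})$ into $\Phi(\overline R)$. Here, as you note, the retraction $\pi:\widehat F_{n,p}\to K$ does not restrict to a map $H\to H\cap K$, so that route is blocked; and ``working entirely in the Frattini quotient of $H$'' is not an alternative argument but a restatement of the goal. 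Finite generation of $H$ only tells you, via the Burnside basis theorem, that lifts of a basis of $H/\Phi(H)$ generate $H$; it does not by itself force the image of $(H\cap K)/\Phi(H\cap K)$ in $H/\Phi(H)$ to have the correct dimension. The standard proofs of this fact use genuinely different input: either the cohomological characterisation of free factors (a closed subgroup $A$ of a free pro-$p$ group $G$ is a free factor iff $H^1(G,\mathbb F_p)\to H^1(A,\mathbb F_p)$ is surjective) together with $\mathrm{cd}_p\le 1$, or the action of $H$ on the pro-$p$ tree associated to the decomposition $\widehat F_{n,p}=K\amalg L$, which exhibits $H\cap K$ as a vertex stabiliser and hence as a free factor of $H$. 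Your outline would need one of these ingredients (or something equivalent) to close the gap.
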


The following Lemma will be essential in the proof of Theorem~\ref{main thm} $(i)$, however, it seems to be of independent interest as well.

\begin{lem}
Let $H$ be a subgroup of $F_n$, and let $\{R_i \mid i \in I\}$ be a family of retracts of $F_n$. 
Then the following holds:
\label{first step}
\begin{enumerate}[(a)]
\item  If $x_1, \ldots, x_m$ is a basis of $H$ and $[x_1, \ldots, x_m] \in R_i$ for some $i \in I$, then $H \leq R_i$.
\item  If $H \subseteq \bigcup_{i \in I} R_i$, then $H \leq R_{i}$ for some $i \in I$.
\end{enumerate}
\end{lem}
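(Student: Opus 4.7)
The plan is to prove part~(a) by induction on $m$, reducing everything to the case $m=2$, and then to obtain part~(b) as an immediate corollary. For (b), given a basis $x_1, \ldots, x_m$ of $H$, I note that the commutator $[x_1, \ldots, x_m]$ lies in $H \subseteq \bigcup_i R_i$, hence in some $R_{i_0}$, so part~(a) applied with this $i_0$ delivers $H \leq R_{i_0}$.

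The substance of the lemma is part~(a) with $m=2$, which I would handle by fixing an arbitrary prime $p$ and passing to $\widehat{F}_{n,p}$. By Proposition~\ref{retracts in pro-p}(i), the closure $\overline{R_i}$ is a free factor of $\widehat{F}_{n,p}$; since $\overline{H}$ is topologically finitely generated by $x_1, x_2$, Proposition~\ref{retracts in pro-p}(ii) then gives that $\overline{H} \cap \overline{R_i}$ is a free factor of $\overline{H}$. The first key step is to show that $\overline{H}$ is a free pro-$p$ group of rank exactly $2$ with $(x_1, x_2)$ as a basis: the rank is at most $2$ because $x_1, x_2$ topologically generate, and rank $\leq 1$ would make $\overline{H}$ abelian, forcing $[x_1,x_2]=1$ in $\widehat{F}_{n,p}$ and contradicting the residual-$p$-ness of $F_n$ together with $[x_1,x_2]\neq 1$ in $F_n$. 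Theorem~\ref{pro-p basic}(iv) then identifies $[x_1,x_2]$ as a test element of $\overline{H}\cong\widehat{F}_{2,p}$, so by (i) it lies in no proper free factor of $\overline{H}$. Since the free factor $\overline{H}\cap\overline{R_i}$ contains $[x_1,x_2]$, it must coincide with $\overline{H}$, giving $\overline{H}\leq\overline{R_i}$. To descend back to $F_n$, I extend $r$ continuously to $\overline{r}:\widehat{F}_{n,p}\to\overline{R_i}$ and observe that $\overline{R_i}\cap F_n = R_i$, since any $g\in\overline{R_i}\cap F_n$ satisfies $r(g)=\overline{r}(g)=g$; hence $H\leq R_i$.

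For the inductive step $m \geq 3$, I set $y = [x_1, \ldots, x_{m-1}]$ so that $[x_1,\ldots,x_m] = [y, x_m] \in R_i$. Since $y$ lies in $[H,H]$ while $x_m \notin [H,H]$, an abelianization argument inside $H \cong F_m$ rules out $\langle y, x_m\rangle$ being cyclic (otherwise $y$ would be a power of $x_m^{\pm 1}$, but $y$ has trivial image in $H^{\mathrm{ab}}$, forcing $y=1$, a contradiction), so $\{y, x_m\}$ is a free basis of the rank-$2$ subgroup $\langle y, x_m\rangle$. Applying the $m=2$ case gives $y, x_m \in R_i$, and the inductive hypothesis applied to $\langle x_1, \ldots, x_{m-1}\rangle$ with its basis and the element $y=[x_1,\ldots,x_{m-1}]\in R_i$ yields $\langle x_1,\ldots,x_{m-1}\rangle \leq R_i$; combining, $H \leq R_i$. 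The main obstacle is the $m=2$ base case: everything hinges on identifying $\overline{H}$ as a copy of $\widehat{F}_{2,p}$ with the expected basis and invoking the pro-$p$ test element property of $[x_1,x_2]$, after which the remainder reduces to routine manipulations in free groups.
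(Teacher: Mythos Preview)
Your argument for part~(a) is correct, though it follows a somewhat different route from the paper. You work directly in $\widehat{F}_{n,p}$, showing $\overline{H}\cap\overline{R_i}$ is a free factor of $\overline{H}$ containing the test element $[x_1,x_2]$, and then descend via $\overline{R_i}\cap F_n=R_i$. The paper instead introduces the auxiliary subgroup $K=\langle L,S\rangle$ with $L=\langle s,x_m\rangle$ and $S=\langle r(s),r(x_m)\rangle$, works in $\widehat{K}_p$, and deduces that $K$ itself has rank two, whence $K=S\leq R_i$. Both arguments use the same pro-$p$ machinery (Proposition~\ref{retracts in pro-p} and Theorem~\ref{pro-p basic}(iv)); your version is a bit more direct, while the paper's avoids the separate descent step and treats all $m\geq 2$ uniformly rather than isolating $m=2$ as a base case.

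There is, however, a genuine gap in your treatment of part~(b). The lemma does not assume $H$ is finitely generated, so your opening line ``given a basis $x_1,\ldots,x_m$ of $H$'' already presupposes finite rank. The paper handles the infinite-rank case with a separate argument: writing $H=\bigcup_{k\geq 1}H_k$ with $H_k=\langle x_1,\ldots,x_k\rangle$, each $H_k$ lies in some $R_{i_k}$ by the finite-rank case; if no single $R_{i_k}$ contains $H$, one extracts a strictly ascending chain of intersections $\bigcap_{j\geq \ell}R_{i_{k_j}}$, each a proper retract of $F_n$ (Theorem~\ref{intersection of retracts}) and hence of rank at most $n-1$, contradicting the fact that ascending chains of bounded-rank subgroups of $F_n$ stabilize. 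You need to supply this (or an equivalent) argument to complete~(b).
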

\begin{proof} $(a)$ Suppose that $R_i$ contains the higher commutator $t=[x_1, \ldots, x_m]$, where 
$x_1, \ldots, x_m$ is a basis of $H$. The proof is by induction on $m$. The case $m=1$ being trivial, we may assume that $m \geq 2$. 

Let $r:F_n \to R_i$ be a retraction; set $s=[x_1, \ldots, x_{m-1}]$, $L=\langle s, x_m \rangle$, $S=\langle r(s), r(x_m) \rangle$, and 
$K=\langle L, S \rangle$. The restriction of $r$ to $K$ is a retraction from $K$ onto $S$. Furthermore, as $t \in R_i$, we have 
$[r(s), r(x_m)]=r(t)=t \neq 1$; thus $S$ has rank two.

Let $p$ be any prime and consider the pro-$p$ completion $\widehat{K}_p$ of $K$. 
Since $\overline{L} $ (the closure of $L$ in $\widehat{K}_p$) is a free pro-$p$ group of rank two with basis $s, x_m$, it follows from
Theorem~\ref{pro-p basic}~$(iv)$ that $t=[s, x_m]$ is a test element of $\overline{L}$.
By Proposition~\ref{retracts in pro-p}, the closure of $S$ in $\widehat{K}_p$ is a free factor of $\widehat{K}_p$ and  $\overline{S} \cap \overline{L}$ is a free factor of $\overline{L}$. 
Since $t \in \overline{S} \cap \overline{L}$ and $t$ is a test element of $\overline{L}$, it follows from Theorem~\ref{pro-p basic}~$(i)$ that 
$\overline{S} \cap \overline{L}=\overline{L}$.  Therefore, $\widehat{K}_p=\overline{S}$, and thus $\widehat{K}_p$ has rank two.
This implies that $K$ also has rank two. Since $S$ and $K$ have the same rank and $S$ is a retract of $K$, we may conclude that $K=S$. 
Hence, $s, x_m \in R_i$, and by applying the induction hypothesis to $s$ and the subgroup $\langle x_1, x_2, \ldots, x_{m-1} \rangle$, we get that 
$x_1, x_2, \ldots, x_{m-1} \in R_i$. Therefore, $H \leq R_i$, as claimed.

\medskip

$(b)$ Suppose that $H \subseteq \bigcup_{i \in I} R_i$. First we consider the case when $H$ has finite rank. Let $x_1, \ldots, x_m$ be a basis of $H$; then 
$[x_1, \ldots, x_m] \in R_i$ for some $i \in I$, and it follows from part $(a)$ that $H \leq R_i$.

Now suppose that $H$ has infinite rank. Let $x_1, x_2, \ldots$ be a basis of $H$. Put $H_k=\langle x_1, \ldots, x_k \rangle$ for $k \geq 1$.
Then $H=\bigcup_{k=1}^{\infty} H_k$, and it follows from the finite rank case that for every $k \geq 1$, there is $i_k \in I$ such that $H_k \leq R_{i_k}$.
If none of the retracts $R_{i_k}  (k \geq 1$) contains $H$, then it is easy to see that there is a subsequence of indices $k_1 < k_2 < \ldots$ such that 
\[\bigcap_{j=1}^{\infty} R_{i_{k_j}} \lneq \bigcap_{j=2}^{\infty} R_{i_{k_j}} \lneq \bigcap_{j=3}^{\infty} R_{i_{k_j}} \lneq \ldots\] 
By Theorem~\ref{intersection of retracts}, each one of these intersections is a proper retract of $F_n$, and thus it has rank at most $n-1$. This is a contradiction with the well-known fact that every ascending sequence of subgroups of $F_n$ of bounded rank is stationary.

\end{proof}

\medskip

\begin{proof}[Proof of Theorem~\ref{main thm} $(i)$]
Let $x_1, x_2$ be a basis of $H$. Of course, we may assume that $H \cap R \neq \{1\}$.
We consider two cases.

\textbf{Case I:} $R$ does not contain a test element of $H$; by Howson's theorem, $R \cap H$ is finitely generated, and it follows from Theorem~\ref{Turner} that it can be covered by proper retracts of $H$: 
\[R \cap H \subseteq \bigcup_{i \in I} S_i,\] 
where $S_i$ is a proper retract of $H$ for every $i \in I$.
By Lemma~\ref{first step} $(b)$, $R \cap H \leq S_i$ for some $i \in I$. Since $S_i$ is a proper retract of $H$, it must be cyclic. Furthermore, since  retracts of free groups are isolated subgroups, it follows that $S_i \leq R$. 
Therefore, $H \cap R=S_i$.

\textbf{Case II:} $R$ contains a test element  $u(x_1,x_2)$ of $H$; set $S=\langle r(x_1), r(x_2) \rangle$ and $K=\langle H , S \rangle$.
By Theorem~\ref{pro-p basic} $(iii)$, there is a prime $p$ such that $u$ is a test element of $\widehat{H}_p$. Since $H$ is of rank two, the inclusion $H \hookrightarrow K$ extends to an isomorphism from
$\widehat{H}_p$ onto $\overline{H} \leq \widehat{K}_p$. Hence, $u$ is a test element of $\overline{H}$.  

Observe that $S$ is a retract of $K$. Hence, by Proposition~\ref{retracts in pro-p},  $\overline{S}$ is a free factor of $\widehat{K}_p$ and $\overline{S} \cap \overline{H}$ is a free factor of $\overline{H}$.
Since $u(x_1,x_2)=r(u(x_1,x_2))=u(r(x_1),r(x_2)) \in \overline{S} \cap \overline{H}$, it follows from Theorem~\ref{pro-p basic} $(i)$ that $\overline{S}=\widehat{K}_p$. Consequently, $S$ and $K$ both have rank two, and since $S$ is a retract of $K$, it follows that $S=K$. 
Therefore, $H \cap R=H$.
\end{proof}

\section{When $H$ is of rank $\geq 3$}
\label{higher rank}

An element $\mathbf{a}=(a_1, \ldots, a_n) \in \mathbb{Z}^n$ is called visible (or primitive) if it belongs to a basis of $\mathbb{Z}^n$, or equivalently, if
$\gcd(a_1, \ldots, a_n)=1$. Let $\pi_{ab}:F_n \to F_n^{ab}$ be the quotient homomorphism from $F_n$ onto its abelianization.
An element $w \in F_n$ is said to be visible if $\pi_{ab}(w)$ is visible in $F_n^{ab}$. 

The visible elements of $F_n$ are precisely the generators of cyclic retracts, that is,
$\langle w \rangle \leq F_n$ is a retract of $F_n$ if and only if $w$ is a visible element of $F_n$. 
Therefore, in the case of cyclic retracts, Bergman's question admits the following reformulation. 

\begin{question}
\label{reformulation}
Let $w$ be a visible element of $F_n$, and let $H$ be a finitely generated subgroup of $F_n$ that contains some non-trivial power of $w$.
Let $m$ be the smallest positive integer such that $w^m \in H$. Is $w^m$ a visible element of $H$?
\end{question}

The search for an $H$ for which the above question has a negative answer could be narrowed down to finite index subgroups. 
Indeed, suppose that $R$ is a retract of $F_n$ and $H$ is a finitely generated subgroup of $F_n$ such that $R \cap H$ is not a retract of $H$.
By Marshall Hall's theorem, $H$ is a free factor of some finite index subgroup $K$ of $F_n$. Furthermore, by the Kurosh subgroup theorem, $R \cap H$ is a free factor of $R \cap K$.
We claim that $R \cap K$ is not a retract of $K$; otherwise $R \cap H$ would also be a retract of $K$, and thus a retract of $H$, which contradicts our assumption.  

Our next result provides further guidance for finding the right $H$ and $w$.  We begin with some preliminaries. 

Let $\Gamma$ be an oriented $1$-dimensional CW complex (a directed graph) with one $0 \textrm{-cell}$ (vertex), denoted by $*$, and $n$ oriented $1$-cells (edges), $e_1, e_2, \ldots, e_n$. 
We think of $F_n$ as the fundamental group of $\Gamma$, and we let $x_i \in F_n$ stand for the homotopy class of the loop determined by the (directed) edge $e_i$ ($1 \leq i \leq n$).

For $1 \leq i \leq n$, let $\sigma_{x_i}: F_n \to \mathbb{Z}$ be the homomorphism defined by $\sigma_{x_i}(x_j)=\delta_{ij}$, where $\delta_{ij}$ is the Kronecker delta.
(Thus for $w \in F_n$, $\sigma_{x_i}(w)$ is the sum of the exponents of all occurrences of $x_i$ in $w$.) The first homology group of $\Gamma$ (with coefficients in $\mathbb{Z}$) is a free abelian group with basis $e_1, \ldots, e_n$; 
moreover, there is a  homomorphism $\sigma: F_n \to H_1(\Gamma)$, defined by $\sigma(w)=\sum_{i=1}^n \sigma_{x_i}(w)e_i$, that factors through an isomorphism from $F_n^{ab}$ onto $H_1(\Gamma)$.

For a given finite index subgroup $H$ of $F_n$, we denote by $\theta_H:F_n \to H^{ab}$ the transfer map. In the geometric context of covering spaces, $\theta_H$ can be described as follows. Let $f_H:(\tilde{\Gamma}_H, \tilde{*}_H) \to (\Gamma, *)$ be the pointed covering space corresponding to $H$.
Given an element $w=x_{i_1}^{\epsilon_1}x_{i_2}^{\epsilon_2} \ldots x_{i_k}^{\epsilon_k} \in F_n$ ($1 \leq i_j \leq n$ and $\epsilon_j \in \{-1, 1\}$ for every $1 \leq j \leq k$), let 
$p=e_{i_1}^{\epsilon_1}e_{i_2}^{\epsilon_2} \ldots e_{i_k}^{\epsilon_k}$ be the corresponding path in $\Gamma$, and for each vertex $v$ of $\tilde{\Gamma}_H$, let $\tilde{p}_v$ be the lift of $p$ with origin $v$.  
Let $c_1, c_2, \ldots, c_s$ be the cycles of the permutation induced by $w$ on the vertices of $\tilde{\Gamma}_H$. If $c_{t}=(v_1v_2 \ldots v_l)$ ($1 \leq t \leq s$), then 
$\tilde{p}_{v_1} \cdot\tilde{p}_{v_2} \cdot \ldots  \cdot\tilde{p}_{v_l}$ is a closed path in $\tilde{\Gamma}_H$; we denote by $h_{c_t}$ the corresponding homology class in $H_1(\tilde{\Gamma}_H)$. 
Then, upon identifying $H^{ab}$ with $H_1(\tilde{\Gamma}_H)$, we have $\theta_H(w)=h_{c_1}+h_{c_2}+ \ldots + h_{c_s}$. 

\begin{lem}
\label{transfer}
The transfer map $\theta_H:F_n \to H^{ab}$ sends visible elements to visible elements. 
\end{lem}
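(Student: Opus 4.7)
My plan is to make the transfer map computational: write $\theta_H(w)$ as an explicit $1$-cycle in $\tilde{\Gamma}_H$ and then read off its coordinates in a basis of $H_1(\tilde{\Gamma}_H) \cong H^{ab}$ coming from a spanning tree.

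The first and main step is the following uniform formula: for every $i \in \{1,\ldots,n\}$ and every directed edge $e$ of $\tilde{\Gamma}_H$ lying over $e_i$, the coefficient of $e$ in the $1$-chain $\theta_H(w) = \sum_{t} h_{c_t}$ equals $\sigma_{x_i}(w)$. Indeed, the cycles of the permutation induced by $w$ partition the vertex set, so summing the cycle loops $\tilde{p}_{v_1}\cdots\tilde{p}_{v_l}$ over all cycles $c_t$ produces, as a $1$-chain, the sum $\sum_v \tilde{p}_v$ of the $d$ lifts of $w$, one per vertex. For a fixed edge $e$ over $e_i$ and a fixed position $j$ at which the letter $x_{i_j}^{\epsilon_j}$ of $w$ has $i_j = i$, exactly one starting vertex $v$ satisfies $v \cdot w_1 \cdots w_{j-1} = $ tail of $e$ (because $v \mapsto v \cdot w_1 \cdots w_{j-1}$ is a bijection of the vertex set), and this $v$ contributes $\epsilon_j$ to the coefficient of $e$; summing over $j$ gives the net signed count $\sigma_{x_i}(w)$, independent of which edge over $e_i$ was chosen.

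With this formula in hand, pick any spanning tree $T$ of $\tilde{\Gamma}_H$. The loops through non-tree edges form a free basis of $H_1(\tilde{\Gamma}_H)$ with the property that, for any $1$-cycle $z$, the coordinate along the basis loop through a non-tree edge $e$ equals the coefficient of $e$ in $z$. Combined with the formula above, the coordinates of $\theta_H(w)$ in this basis are precisely the values $\sigma_{x_i}(w)$, one per non-tree edge over each $e_i$, as $i$ ranges over $\{1,\ldots,n\}$.

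To conclude, I need each $\sigma_{x_i}(w)$ to actually appear in this list, i.e., for each $i$ at least one edge over $e_i$ must lie outside $T$. This is automatic: the $d$ edges over $e_i$ realise the permutation of the vertex set induced by $x_i$, so the associated subgraph is a disjoint union of simple directed cycles and cannot be contained in the acyclic tree $T$. Hence the gcd of the coordinates of $\theta_H(w)$ divides $\gcd(\sigma_{x_1}(w),\ldots,\sigma_{x_n}(w)) = 1$, so $\theta_H(w)$ is visible in $H^{ab}$. The only real obstacle is the bookkeeping in the first step; everything after it is immediate.
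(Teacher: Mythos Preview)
Your argument is correct. The core combinatorial observation---that for each letter position $j$ in $w$ and each edge $e$ of $\tilde{\Gamma}_H$ over $e_{i_j}$, exactly one starting vertex $v$ makes the $j$-th step of $\tilde p_v$ traverse $e$---is precisely the step the paper isolates as well. The difference is only in packaging. The paper, instead of choosing a spanning tree and reading off all coordinates of $\theta_H(w)$, defines a single homomorphism $\phi:H_1(\tilde{\Gamma}_H)\to H_1(\Gamma)$ that picks out, for each $i$, the coefficient of the one lift $\tilde e_i$ of $e_i$ through the basepoint, and checks that $\phi\circ\theta_H=\sigma$; non-visibility of $\theta_H(w)$ then forces non-visibility of $\sigma(w)$ immediately. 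This avoids the spanning-tree bookkeeping and the separate observation that some edge over each $e_i$ must lie outside $T$. Your route, on the other hand, proves a little more: it gives an explicit description of $\theta_H(w)$ in a natural basis of $H^{ab}$, showing in particular that \emph{every} coordinate is one of the $\sigma_{x_i}(w)$, not just that the image under some map is.
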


\begin{proof}
For each $1 \leq i \leq n$, let $\tilde{e}_i$ be the lift of $e_i$ to an oriented edge of $\tilde{\Gamma}_H$ with origin $\tilde{*}_H$.
Let $C_1(\tilde{\Gamma}_H)$ be the group of (cellular) $1$-chains of $\tilde{\Gamma}_H$, and let $\phi:H_1(\tilde{\Gamma}_H) \to H_1(\Gamma)$ be the restriction of the
homomorphism from $C_1(\tilde{\Gamma}_H)$ to $H_1(\Gamma)$ that sends $\tilde{e}_i$ to $e_i$ ($1 \leq i \leq n$) and maps all of the other edges of $\tilde{\Gamma}_H$ to $0$.

We claim that the following diagram commutes: 

\begin{displaymath}
    \xymatrix{ F_n \cong \pi_1(\Gamma) \ar[r]^{\theta_H} \ar[dr]^{\sigma} & H_1(\tilde{\Gamma}_H) \cong  H^{ab} \ar[d]^\phi \\
              & H_1(\Gamma) \cong F_n^{ab}}
\end{displaymath}

\medskip

Let $w=x_{i_1}^{\epsilon_1}x_{i_2}^{\epsilon_2} \ldots x_{i_k}^{\epsilon_k} \in F_n$, and let
$p=e_{i_1}^{\epsilon_1}e_{i_2}^{\epsilon_2} \ldots e_{i_k}^{\epsilon_k}$ be the corresponding path in $\Gamma$. 
For every vertex $v$ of $\tilde{\Gamma}_H$, let $\tilde{p}_v$ be the lift of $p$ to a path in $\tilde{\Gamma}_H$ with origin $v$. 
Then $\sigma(w)=\phi(\theta_H(w))$ is a consequence of the fact that for each $1 \leq j \leq k$, there is a unique vertex $v$ such that in $\tilde{p}_v$ the edge $e_{i_j}^{\epsilon_{i_j}}$ is lifted to $\tilde{e}_{i_j}^{\epsilon_{i_j}}$.

If $\theta_H(w)$ (for some $w \in F_n$) is not a visible element of $H^{ab}$, then we can write $\theta_H(w)=ma$ for some $a \in H^{ab}$ and $m \geq 2$.
Therefore, $\sigma(w)=\phi(\theta_H(w))=m\phi(a)$ is not a visible element of $F_n^{ab}$, and thus $w$ is not visible in $F_n$.
\end{proof}
\begin{pro}
Let $H$ be a finite index subgroup of $F_n$, and let $w \in F_n$ be a visible element. Suppose that $m$ is the smallest positive integer such that $w^m \in H$.
Then $w^m$ is a visible element of $H$ if one of the following holds: 
\begin{enumerate}[(a)]
\item $\langle w \rangle$ acts transitively on the (right) cosets of $H$;
\item $H$ is a subnormal subgroup of $F_n$. 
\end{enumerate}
\end{pro}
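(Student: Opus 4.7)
The plan is to read off $\theta_H(w)$ explicitly from the cycle description preceding Lemma~\ref{transfer} and then invoke that lemma.

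For part~(a), the hypothesis that $\langle w\rangle$ acts transitively on the cosets of $H$ forces the induced permutation on the vertices of $\tilde{\Gamma}_H$ to consist of a single cycle, whose length equals $[\langle w\rangle:\langle w\rangle\cap H]=m$; in particular $[F_n:H]=m$. Basing this cycle at $\tilde{*}_H$, the closed lift appearing in the formula for $\theta_H(w)$ is precisely the lift of $w^m$, which represents $w^m$ in $\pi_1(\tilde{\Gamma}_H,\tilde{*}_H)=H$. Therefore $\theta_H(w)$ equals the image of $w^m$ in $H^{ab}$, and Lemma~\ref{transfer} immediately yields the visibility of $w^m$ in $H$.

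For part~(b), I would first treat the special case where $H$ is normal in $F_n$. Then every cycle has length $m$, giving $\ell=[F_n:H]/m$ cycles; choosing coset representatives $g_1,\ldots,g_\ell$ for the orbits and translating each closed lift back to $\tilde{*}_H$ via a lift of $g_i$, the geometric description of $\theta_H$ becomes the classical transfer expression
\[\theta_H(w)\equiv \sum_{i=1}^{\ell} g_iw^m g_i^{-1}\pmod{[H,H]}.\]
If the image of $w^m$ in $H^{ab}$ were not visible, it would lie in $pH^{ab}$ for some prime $p$; normality of $H$ makes conjugation by $F_n$ act on $H^{ab}$ and preserve $pH^{ab}$, so every summand above, and hence $\theta_H(w)$ itself, would lie in $pH^{ab}$, contradicting the visibility supplied by Lemma~\ref{transfer}.

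The general subnormal case then follows by induction on the length $s$ of a subnormal chain $H=H_0\triangleleft H_1\triangleleft\cdots\triangleleft H_s=F_n$, the base case $s=1$ being the normal case just handled. For $s\geq 2$, let $m_1$ be minimal with $w^{m_1}\in H_{s-1}$; the normal case applied to $H_{s-1}\triangleleft F_n$ yields the visibility of $w^{m_1}$ in the free group $H_{s-1}$, and the inductive hypothesis applied to the shorter chain $H\triangleleft H_1\triangleleft\cdots\triangleleft H_{s-1}$ with the visible element $w^{m_1}$ yields the visibility of $w^{m_1 m_2}$ in $H$, where $m_2$ is minimal with $(w^{m_1})^{m_2}\in H$; the identification $m=m_1m_2$ is a routine divisibility check. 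The one non-routine ingredient is the exploitation of $F_n$-invariance of $pH^{ab}$ in the normal case, which is precisely where the hypothesis $H\triangleleft F_n$ is essential and which makes the whole argument go through.
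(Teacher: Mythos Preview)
Your proof is correct and follows essentially the same approach as the paper's. For part~(a) both arguments identify $\theta_H(w)$ with the image of $w^m$ in $H^{ab}$ and invoke Lemma~\ref{transfer}; for part~(b) both reduce to the normal case by induction along a subnormal chain, then use the action of $F_n/H$ on $H^{ab}$ to show that any common divisor of the coordinates of the image of $w^m$ must also divide $\theta_H(w)$, contradicting Lemma~\ref{transfer}. The only cosmetic differences are that the paper phrases the divisibility argument directly (``if $h_{c_1}=ma$ then $\theta_H(w)=m(\cdots)$'') rather than one prime at a time, and compresses the subnormal induction into a single sentence.
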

\begin{proof}
If $(a)$ holds, then $\theta_H(w)$ coincides with the image of $w^m$ in $H^{ab}$; hence, $w^m$ is visible in $H$ by Lemma~\ref{transfer}.

Suppose that $(b)$ holds; by induction, we may further assume that $H$ is a normal subgroup of $F_n$. 
Let $c_1, c_2, \ldots, c_s$ be the cycles of the permutation induced by $w$ on the vertices of $\tilde{\Gamma}_H$, and let $h_{c_1}, h_{c_2}, \ldots  ,h_{c_s}$ be the corresponding homology classes (defined as above).
Suppose that $c_1$ is the cycle containing $\tilde{*}_H$; then $h_{c_1}$ is the image of $w^m$ in $H^{ab}$.

The group of deck transformations, $G/H$, of $\tilde{\Gamma}_H$ acts transitively on the set $\{h_{c_1}, h_{c_2}, \ldots  ,h_{c_s}\} \subseteq H_1(\tilde{\Gamma}_H)$; for each $2 \leq j \leq s$,  fix $g_j \in G/H$ such that $h_{c_1}^{g_j}=h_{c_j}$.
If $h_{c_1}=ma$  for some $a \in H^{ab}$ and $m \geq 1$, then 
\[\theta_H(w)=h_{c_1} + h_{c_2} + \ldots  + h_{c_s}=h_1+h_1^{g_2}+ \ldots + h_1^{g_s}=m(a+a^{g_1}+ \ldots+a^{g_s}).\]
Since, by Lemma~\ref{transfer}, $\theta_H(w)$ is visible in $H^{ab}$, it follows that $m=1$. Therefore, $h_{c_1}$ is visible in $H^{ab}$ and $w^m$ is visible in $H$.  
\end{proof}

\medskip 

For a while, our discussion will be restricted to the free group of rank two. Accordingly, we let $\Gamma$ denote the CW complex with one $0$-cell and two (oriented) 1-cells, $e$ and $f$.
We think of $F_2$ as the fundamental group of $\Gamma$, and we let $x$ and $y$ stand for the homotopy classes of the loops determined by $e$ and $f$, respectively. 
  
For $m \geq 2$, let $\Gamma_m$ be a CW complex with $m$ $0$-cells, $v_0, v_1, \ldots, v_{m-1}$, and $2m$ (oriented) $1$-cells, $e_i, f_i$ ($0 \leq i \leq m-1$). 
The origin of both $e_i$ and $f_i$ is the vertex $v_{i}$; the terminus of $e_i$ is $v_{m-i}$ and the terminus of $f_i$ is $v_{i+1}$, where the indices are taken modulo $m$ (see Figure~\ref{figure}).
Let $h_m:\Gamma_m \to \Gamma$ be the (graph) map defined by $h_m(e_i)=e$ and $h_m(f_i)=f$ for every $1 \leq i \leq m-1$.
It is straightforward to verify that $h_m$ is a covering map. 

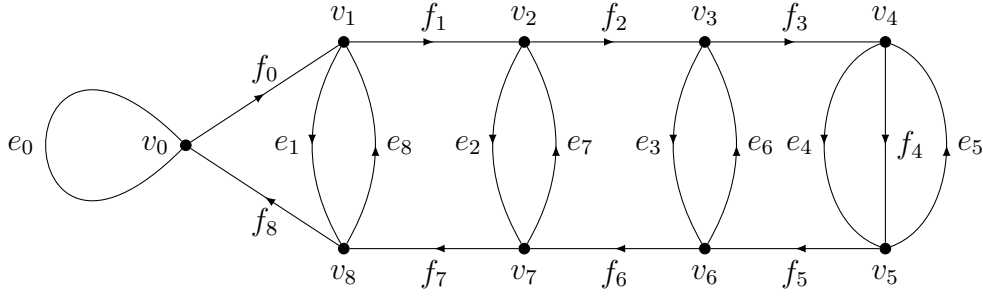
\begin{figure}[t]
\[\begin{tikzpicture}[x=.3 cm, y=.55 cm, 
    every edge/.style={
        draw,
        postaction={decorate,
                    decoration={markings,mark=at position 0.5 with {\arrow{latex}}}
                   }
        }
]
	\vertex[fill] (v_0) at (-7,0)[label=left:$v_0$]{};
	\vertex[fill] (v_1) at (0,2.5)[label=above:$v_1$]{};
        \vertex[fill] (v_2) at (8,2.5)[label=above:$v_2$]{};		
        \vertex[fill] (v_3) at (16,2.5)[label=above:$v_3$]{};			
        \vertex[fill] (v_4) at (24,2.5)[label=above:$v_4$]{};		
        \vertex[fill] (v_8) at (0,-2.5)[label=below:$v_8$]{};
        \vertex[fill] (v_7) at (8,-2.5)[label=below:$v_7$]{};		
        \vertex[fill] (v_6) at (16,-2.5)[label=below:$v_6$]{};			
        \vertex[fill] (v_5) at (24,-2.5)[label=below:$v_5$]{};

\path

        (v_0) edge node[above]{$f_0$} (v_1)
	(v_1) edge node[above]{$f_1$} (v_2)
        (v_2) edge node[above]{$f_2$} (v_3)
        (v_3) edge node[above]{$f_3$} (v_4)
        (v_4) edge node[right]{$f_4$} (v_5)
        (v_5) edge node[below]{$f_5$} (v_6)
        (v_6) edge node[below]{$f_6$} (v_7)
        (v_7) edge node[below]{$f_7$} (v_8)
        (v_8) edge node[below]{$f_8$} (v_0)
        (v_1) edge[bend right] node[left]{$e_1$} (v_8)
        (v_2) edge[bend right] node[left]{$e_2$} (v_7)
        (v_3) edge[bend right] node[left]{$e_3$} (v_6)
        (v_4) edge[bend right=70] node[left]{$e_4$} (v_5)        
        (v_8) edge[bend right] node[right]{$e_8$} (v_1)
        (v_7) edge[bend right] node[right]{$e_7$} (v_2)
        (v_6) edge[bend right] node[right]{$e_6$} (v_3)
        (v_5) edge[bend right=70] node[right]{$e_5$} (v_4)
;
\draw (v_0) to [out=225,in=135,looseness=80] (v_0);
\draw (-14.3,0) node{$e_0$};
\end{tikzpicture}\]
\caption{The graph $\Gamma_9$.\label{figure}}
\end{figure}

Consider the subgroup $H_m=(h_m)_*(\pi_1(\Gamma_m, v_0))$ of $F_2$. The edges $f_0, f_1, \ldots, f_{m-2}$ form a spanning subtree of $\Gamma_m$; the basis of $H_m$ determined (in the standard way) by this tree is 
$x, t_1=yxy^{-(m-1)}, t_2=y^2xy^{-(m-2)}, \ldots, t_{m-1}=y^{m-1}xy^{-1}, y^m$. 

It will be convenient to have another way of describing the subgroup $H_m$. Let $\psi_m:F_2 \to D_m = \langle  t, s \mid t^2=1, s^m=1, tst=s^{-1} \rangle$ be the homomorphism defined by $\psi_m(x)=t$ and $\psi_m(y)=s$.
We claim that $H_m=\psi_m^{-1}(\langle t \rangle)$. Indeed, it is easy to check that $H_m \leq \psi_m^{-1}(\langle t \rangle)$, and since both subgroups have index $m$ in $F_2$, they must coincide.

For $k \geq 1$, set $w_k=x[x,y]^k \in F_2$.  (Here, $[x,y]=xyx^{-1}y^{-1}$.)
Each $w_k$ is a visible element in $F_2$. Since $\psi_m(w_k)=t[t,s]^k=ts^{-2k}$, we have that $w_k \in H_m$ if and only if $m \mid 2k$. Moreover,
$\psi_m(w_k^2)=1$, and thus $w_k^2 \in H_m$ for every $k \geq 1$.
 
\begin{lem}
\label{example}
Let $m=2k+1$ with $k \geq 1$. Then $w_k \notin H_m$ and 
\[w_k^2=x^2t_{m-1}^{-1}t_{m-2}t_{m-3}^{-1}t_{m-4} \ldots t_2^{-1}t_1t_{m-1}t_{m-2}^{-1}t_{m-3}t_{m-4}^{-1} \ldots t_2t_1^{-1} \in H_m. \]
In particular, $w_k^2$ is not a visible element of $H_m$. 
\end{lem}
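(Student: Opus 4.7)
My plan is to address the three claims in order, using the homomorphism $\psi_m$, the Reidemeister--Schreier rewriting, and abelianization, respectively.

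For the membership assertions, I apply $\psi_m$. Since $tst^{-1}=s^{-1}$ in $D_m$, we get $\psi_m([x,y])=[t,s]=s^{-2}$, and hence $\psi_m(w_k)=ts^{-2k}$. With $m=2k+1$, $\gcd(m,2k)=1$, so $s^{-2k}\neq 1$ in $D_m$, and thus $\psi_m(w_k)\notin\langle t\rangle$, giving $w_k\notin H_m$. On the other hand, $\psi_m(w_k^2)=(ts^{-2k})^2=t\,s^{-2k}\,t\,s^{-2k}=t\cdot(ts^{2k})\cdot s^{-2k}=1$, so $w_k^2\in\ker\psi_m\leq H_m$.

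The heart of the proof is the explicit expression for $w_k^2$ in the basis $\{x,t_1,\ldots,t_{m-1},y^m\}$, which I plan to obtain by the Reidemeister--Schreier process, read off geometrically by lifting $w_k^2$ to the covering graph $\Gamma_m$ starting at $v_0$ and recording each non-tree edge crossed with the corresponding basis generator. Direct verification shows that $[x,y]$ acts on the vertex set by $v_i\mapsto v_{i-2}\pmod{m}$, while $x$ acts by $v_i\mapsto v_{m-i}$. Reading $w_k^2=x\,[x,y]^k\,x\,[x,y]^k$ from left to right, the starting vertices of the successive $[x,y]$-factors are $v_0, v_{m-2}, v_{m-4},\ldots,v_3$ in the first half (with $[x,y]^k$ ending at $v_1$), followed (after the middle letter $x$) by $v_{m-1}, v_{m-3},\ldots,v_2$ in the second half (ending at $v_0$). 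A short local computation, using the Schreier transversal $\{1,y,\ldots,y^{m-1}\}$ so that the Reidemeister--Schreier generator attached to the non-tree edge $e_i$ is $y^ixy^{-(m-i)}=t_i$, gives that a factor $[x,y]$ starting at $v_0$ contributes $x\,t_{m-1}^{-1}$, while a factor starting at $v_j$ with $2\leq j\leq m-1$ contributes $t_j\,t_{j-1}^{-1}$; the initial $x$ contributes $x$, and the middle $x$ (going from $v_1$ to $v_{m-1}$ via $e_1$) contributes $t_1$. Concatenating these contributions reproduces exactly the displayed formula. The main technical obstacle is careful bookkeeping to identify the precise non-tree edge and its orientation traversed at each step.

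For the last assertion, I abelianize the formula. In $H_m^{ab}$ every $t_j$ with $1\leq j\leq m-1$ occurs exactly twice in the product for $w_k^2$, once in each half and with opposite signs in the two halves; hence all the $t_j$-coefficients cancel in $H_m^{ab}$ and the image of $w_k^2$ is simply $2$ times the image of $x$. Since this is divisible by $2$, $w_k^2$ is not a visible element of $H_m$.
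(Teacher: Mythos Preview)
Your proposal is correct and follows essentially the same route as the paper: both arguments lift the word $w_k^2$ to the covering graph $\Gamma_m$ at the basepoint $v_0$ and read off the Schreier generators attached to the non-tree edges; the only cosmetic difference is that you group the lift by the $[x,y]$-blocks of the unreduced word $x[x,y]^k x[x,y]^k$, whereas the paper first rewrites $w_k^2$ as the reduced path $ee(fe^{-1}f^{-1}e)^k e(fe^{-1}f^{-1}e)^{k-1}fe^{-1}f^{-1}$ and groups accordingly. You also make explicit the abelianization step showing the $t_j$-contributions cancel in $H_m^{ab}$, which the paper leaves to the reader.
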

\begin{proof}
Since $m$ does not divide $2k$, it follows from the discussion before the lemma that $w_k \notin H_m$. Consider the path
\[p_k=ee \underbrace{(fe^{-1}f^{-1}e)(fe^{-1}f^{-1}e) \ldots (fe^{-1}f^{-1}e)}_{k \text{ times}}e\underbrace{(fe^{-1}f^{-1}e) \ldots (fe^{-1}f^{-1}e)}_{(k-1) \text{ times}}fe^{-1}f^{-1}\]
in $\Gamma$; it determines the element $w_k^2 \in F_2$.

For each $0 \leq i \leq m-3$, the path $fe^{-1}f^{-1}e$ in $\Gamma$ lifts in $\Gamma_m$ to the path $f_i e_{m-(i+1)}^{-1} f_{m-(i+2)}^{-1} e_{m-(i+2)}$  with origin $v_i$ and terminus $v_{i+2}$.
Furthermore, the path $fe^{-1}f^{-1}$ lifts to the path $f_{m-2} e_{1}^{-1} f_{0}^{-1}$ with origin $v_{m-2}$ and terminus $v_0$.

It follows now easily that $p_k$ lifts to the following loop at $v_0$ in $\Gamma_m$:
\begin{align*}
\tilde{p}_k=&e_0e_0 (f_0e_{m-1}^{-1}f_{m-2}^{-1}e_{m-2})(f_{2}e_{m-3}^{-1}f_{m-4}^{-1}e_{m-4}) \ldots (f_{m-3}e_{2}^{-1}f_1^{-1}e_1) \\ 
 &e_{m-1} (f_1 e_{m-2}^{-1}f_{m-3}^{-1}e_{m-3}) \ldots (f_{m-4} e_{3}^{-1}f_{2}^{-1}e_2)f_{m-2}e_1^{-1}f_0^{-1}.
\end{align*}
Finally, the expression for $w_k^2$ in terms of the basis of $H_m$ can be read from $\tilde{p}_k$.
\end{proof}

\begin{example}
For a simple example that provides a negative answer to Bergman's question, take $K=\langle x, t_1=yxy^{-2}, t_2=y^2xy^{-1} \rangle \leq F_2$. 
Then $w_1=x[x,y]$ is a visible element of $F_2$, and thus $R=\langle w_1 \rangle$ is a retract of $F_2$;
since $w_1^2=x^2[t_2^{-1}, t_1]$ is not a visible element of $K$, it follows that $K \cap R=\langle  w_1^2 \rangle$ is not a retract of $K$.
\end{example}

\medskip

\begin{proof}[Proof of Theorem~\ref{main thm} $(ii)$]
For $m \geq 3$, we set $L_m=H_{m-1}$ if $m$ is even; otherwise we define $L_m$ to be the subgroup of $H_m$ generated by $x, t_0, t_1, \ldots, t_{m-1}$.
Observe that $\textrm{rk}(L_m)=m$ for every $m \geq 3$.
 
For $m \geq 1$, let $R_m=\langle w_k \rangle \leq F_2$, where $k=\lfloor \frac{m-1}{2} \rfloor$. Then $R_m$ is a retract of $F_2$, and
it follows from Lemma~\ref{example} that $R_m \cap L_m$ is not a retract of $L_m$. This proves Theorem~\ref{main thm} $(ii)$ in the case when $n=2$.

For the general case,  we write $F_n=F_2 * F_{n-2}$ ($F_1$ stands for an infinite cyclic group), and we consider $L_m$ and $R_m$ as subgroups of the first factor of the free product decomposition of $F_n$.
Given any retract $S$ of the second factor, we have that $R_m*S$ is a retract of $F_n$. However, $(R_m*S) \cap L_m=R_m \cap L_m$ is not a retract of $L_m$.
\end{proof}

In \cite{SnoTan4}, the following  conjecture was made

\begin{conjecture}
\label{conj-test elements}
Let $H$ be a finitely generated subgroup of $F_n$ that is not contained in a proper retract of $F_n$. 
Then every test element of $H$ is a test element of $F_n$.
\end{conjecture}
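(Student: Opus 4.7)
The plan is to proceed by contradiction via Turner's theorem (Theorem~\ref{Turner}). Let $H \leq F_n$ be a finitely generated subgroup not contained in any proper retract of $F_n$, and let $u$ be a test element of $H$. To show $u$ is a test element of $F_n$, it suffices to prove that $u$ belongs to no proper retract of $F_n$. Suppose for contradiction that $u \in R$ for some proper retract $R$ of $F_n$, with retraction $r\colon F_n \to R$. Combining Lemma~\ref{first step}(b) with Theorem~\ref{Turner}, the hypothesis on $H$ is equivalent to $H$ containing a test element $t$ of $F_n$; note in particular that $t \notin R$. A first reduction: if $r(H) \subseteq H$, then $r|_H$ is an endomorphism of $H$ fixing $u$, hence an automorphism by the test-element hypothesis, so $H = r(H) \leq R$, a contradiction. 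Thus the essential case is $r(H) \not\subseteq H$, and the goal is to manufacture, from $r$ together with $t \in H$ and $u \in H$, a non-surjective endomorphism of $H$ fixing $u$ that violates $u$ being a test element of $H$.

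The natural first attempt is the pro-$p$ strategy of Section~\ref{rank two}. Fix a prime $p$; by Proposition~\ref{retracts in pro-p}, the closure $\overline{R}$ is a proper free factor of $\widehat{F}_{n,p}$ and $\overline{R} \cap \overline{H}$ is a free factor of $\overline{H}$ containing $u$. If one could find $p$ for which $u$ is a test element of $\overline{H}$, then Theorem~\ref{pro-p basic}(i) would force $\overline{R} \cap \overline{H} = \overline{H}$, hence $\overline{H} \leq \overline{R}$; since finitely generated subgroups of $F_n$ are closed in the pro-$p$ topology, this would give $H \leq R$, contradicting the hypothesis on $H$.

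The main obstacle is that the required pro-$p$ test-element property of $u$ in $\overline{H}$ cannot hold in general. By the Remark following Theorem~\ref{pro-p basic}, as soon as $H$ is a free group of rank at least three---in particular for $H = F_n$ with $n \geq 3$---there exist test elements of $H$ that fail to be test in $\widehat{H}_p$ for every prime $p$. Consequently, the hypothesis on $H$ must be used in a finer way than simply passing to $\widehat{H}_p$, and the existence of the test element $t$ of $F_n$ inside $H$ must enter non-trivially. A plausible route is a Stallings-graph/Whitehead analysis of $r$ along a basis of $H$, using an ambient finite-index cover containing $H$ as a free factor (invoked via Marshall Hall's theorem, as in Section~\ref{higher rank}); the transfer map (Lemma~\ref{transfer}) could then transport visibility and test-element information between $H$ and $F_n$, producing either a pro-$p$ free-factor obstruction for $u$ in $\overline{H}$ or an explicit proper retract of $H$ containing $u$. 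An alternative is to work inside $\widehat{F}_{n,p}$ directly, using the joint positions of $u$ and $t$ inside $\overline{H}$ relative to the free-factor $\overline{R}$ to derive an incompatibility. In any such approach Theorem~\ref{main thm}(ii) rules out the naive constructions, and the crux is the coordinated use of the test elements $u$ of $H$ and $t$ of $F_n$ sitting together inside $H$.
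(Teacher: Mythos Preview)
Your proposal attempts to \emph{prove} Conjecture~\ref{conj-test elements}, but in the paper this conjecture is not proved --- it is \emph{disproved}. The theorem immediately following the conjecture states that Conjecture~\ref{conj-test elements} does not hold, and the paper exhibits a counterexample. Consequently every strategy in your proposal is aimed at establishing a false statement, and the obstacles you encountered (the failure of the pro-$p$ test property for ranks $\geq 3$, the case $r(H)\not\subseteq H$) are not technical hurdles to be overcome but genuine manifestations of the falsity of the conjecture.

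Concretely, the paper's argument runs opposite to yours. Using Theorem~\ref{main thm}(ii), one takes a rank-three subgroup $H\leq F_n$ and a retract $R$ with $H\cap R$ not a retract of $H$; after replacing $F_n$ by the intersection $S$ of all retracts containing $H$ (and $R$ by $R\cap S$), one may assume $H$ lies in no proper retract of $F_n$. The key step is to show that $H\cap R$ must contain a test element of $H$: if not, Turner's theorem covers $H\cap R$ by proper retracts of $H$, Lemma~\ref{first step}(b) puts $H\cap R$ inside a single proper retract $T_i$ of $H$ of rank $\leq 2$, and then Theorem~\ref{main thm}(i) applied to $T_i$ forces $H\cap R=T_i\cap R$ to be a retract of $T_i$, hence of $H$ --- contradiction. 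Thus some $u\in H\cap R$ is a test element of $H$, yet $u\in R$ cannot be a test element of $F_n$ by Turner's theorem.

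Note how Theorem~\ref{main thm}(ii), which you invoked only as ruling out ``naive constructions'', is in fact the \emph{source} of the counterexample, and Theorem~\ref{main thm}(i) together with Lemma~\ref{first step}(b) is what pins down a test element of $H$ inside the bad intersection. Your reduction ``if $r(H)\subseteq H$ then done'' is correct, but the residual case $r(H)\not\subseteq H$ is precisely where the counterexamples live, and no pro-$p$, Stallings-graph, or transfer argument can close that gap.
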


As a consequence of Theorem~\ref{main thm}, we obtain the following 

\begin{thm}
Conjecture~\ref{conj-test elements} does not hold. 
\end{thm}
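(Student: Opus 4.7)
The plan is to exhibit an explicit counterexample to Conjecture~\ref{conj-test elements} using the construction from the proof of Theorem~\ref{main thm}(ii). Fix an odd integer $m \geq 3$, set $k = (m-1)/2$, and take $H = L_m \leq F_2$ as in Lemma~\ref{example} together with the cyclic proper retract $R = \langle w_k \rangle$ of $F_2$, where $w_k = x[x,y]^k$. Observe that $H$ has rank $m \geq 3$, whereas every proper retract of $F_2$ is cyclic, so $H$ cannot be contained in any proper retract of $F_2$. Furthermore, $w_k^2 \in H$ lies in the proper retract $R$ of $F_2$, so by Turner's theorem (Theorem~\ref{Turner}) it is not a test element of $F_2$. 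The entire argument therefore reduces to showing that $w_k^2$ is a test element of $H$.

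I plan to verify the latter via Turner's theorem applied to $H$: it suffices to show $w_k^2$ does not belong to any proper retract of $H$. When the retract $\langle g \rangle \leq H$ is cyclic, the relation $w_k^2 = g^j$ combined with uniqueness of roots in $F_2$ and the fact that the square root $w_k$ of $w_k^2$ does not lie in $H$ forces $j = 1$, whence $g = w_k^{\pm 2}$; but Lemma~\ref{example} shows $w_k^2$ is not visible in $H$, contradicting the visibility of the generator of a rank-one retract. For retracts $S \leq H$ of rank at least $2$, I intend to pass to a pro-$p$ completion $\widehat{H}_p$: by Proposition~\ref{retracts in pro-p}(i) the closure $\overline{S}$ is a free factor of $\widehat{H}_p$, and by parts (i) and (ii) of Theorem~\ref{pro-p basic} it suffices to show that $w_k^2$ is a test element of $\widehat{H}_p$ for some prime $p$, equivalently that $w_k^2$ lies in no proper free factor of $\widehat{H}_p$. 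The plan is to do so by analyzing the image of $w_k^2$ in the Magnus (free metabelian pro-$p$) quotient of $\widehat{H}_p$ and exploiting the rigidity of any endomorphism of $\widehat{H}_p$ fixing $w_k^2$.

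Once this is established, the counterexample is immediate: $H$ is a finitely generated subgroup of $F_2$ not contained in any proper retract of $F_2$, and $w_k^2 \in H$ is a test element of $H$ that is not a test element of $F_2$, contradicting Conjecture~\ref{conj-test elements}. The principal obstacle is the rank-at-least-$2$ case of the claim that $w_k^2$ is a test element of $H$, since no single retract-theoretic fact (like Lemma~\ref{example}) covers all such retracts of $H$ simultaneously; the pro-$p$/Magnus approach is intended to handle this uniformly by reducing the question to rigidity of endomorphisms of the free pro-$p$ group $\widehat{H}_p$.
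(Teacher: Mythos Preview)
Your setup and the cyclic-retract case are correct and more explicit than the paper's version. The genuine gap is the rank-$\ge 2$ case: the ``Magnus/metabelian rigidity'' plan is not actually carried out, and there is no reason to expect it to succeed. Indeed, your target statement---that $w_k^2$ is a test element of $\widehat{H}_p$ for some prime $p$---may simply be false: the Remark following Theorem~\ref{pro-p basic} records that for free groups of rank $\ge 3$ there exist test elements that are \emph{not} test elements of any pro-$p$ completion, so passing to $\widehat{H}_p$ can lose exactly the information you need. Saying you will ``exploit the rigidity of any endomorphism of $\widehat{H}_p$ fixing $w_k^2$'' is a restatement of the goal, not a method.

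The paper closes this gap with a tool you already have on the table, Theorem~\ref{main thm}(i), together with Lemma~\ref{first step}(b). Restrict to $m=3$ so that $\textrm{rk}(H)=3$ and every proper retract of $H$ has rank at most $2$. If $H\cap R$ contained no test element of $H$, Turner's theorem and Lemma~\ref{first step}(b) would place $H\cap R$ inside a single proper retract $T$ of $H$. By Theorem~\ref{main thm}(i), $T\cap R$ is a retract of $T$, hence a retract of $H$; but $H\cap R\subseteq T\subseteq H$ forces $T\cap R=H\cap R$, contradicting the choice of $H$ and $R$. Thus $H\cap R=\langle w_1^{2}\rangle$ does contain a test element of $H$, and (by your own cyclic argument, or just because powers of a test element are test elements) $w_1^{2}$ itself is one. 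In short: replace your pro-$p$ detour for the rank-$2$ retracts by a one-line appeal to Theorem~\ref{main thm}(i), and take $m=3$; then your explicit counterexample goes through. The paper presents the same mechanism more abstractly (it never names $w_1^{2}$, and it first replaces $F_n$ by the smallest retract containing $H$), but the content is identical.
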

\begin{proof}
Choose a subgroup $H$ of $F_n$ of rank three for which there exists a retract $R$ of $F_n$ such that $H \cap R$ is not a retract of $H$.
Let $S$ be the intersection of all retracts of $F_n$ that contain $H$. By Theorem~\ref{intersection of retracts}, $S$ and $R \cap S$ are retracts of $F_n$.
Furthermore, $R \cap S$ is a proper retract of $S$, $H \cap (R \cap S)=H \cap R$ is not a retract of $H$, and $H$ is not contained in a proper retract of $S$.
Hence, after replacing $F_n$ by $S$ and replacing $R$ by $R \cap S$, we may assume that $H$ is not contained in a proper retract of $F_n$.

If $H \cap R$ does not contain a test element of $H$, then by Theorem~\ref{Turner}, there are proper retracts $T_i, i \in I,$ of $H$ such that 
$H \cap R \subseteq \bigcup_{i \in I} T_i$. By Lemma~\ref{first step} $(b)$, $H \cap R \subseteq T_i$ for some $i \in I$.
Since $T_i$ is a proper retract of $H$, it has rank at most two, and it follows from Theorem~\ref{main thm} $(i)$ that  $T_i \cap R$ is a retract of $T_i$.
This implies that $H \cap R=T_i \cap R$ is a retract of $H$, a contradiction. 

Therefore, $H \cap R$ contains a test element of $H$. However, by Theorem~\ref{Turner}, no element contained in $R$ is a test element of $F_n$. 
\end{proof}

\section{The Dicks-Ventura Conjecture}
\label{Dicks-Ventura}

Throughout this section, $p$ denotes a fixed prime.  The pro-$p$ topology of $F_n$ is the coarsest topology with respect to which $F_n$ is a topological group and every homomorphism from $F_n$ into a finite $p$-group is continuous. The normal subgroups of $F_n$ of index a finite power of $p$ form a base for the neighborhoods of the identity for the pro-$p$ topology.

Given a subgroup $H$ of $F_n$, we denote by $\textrm{cl}(H)$ the closure of $H$ in the pro-$p$ topology of $F_n$ (the notation $\overline{H}$ continues to be used for the closure of $H$ in $\widehat{F}_{n,p}$).

\begin{lem}
\label{lem D-V}
Let $R$ be a retract of $F_n$, and let $H$ be a finitely generated subgroup of $F_n$. Then the following holds:
\begin{enumerate}[(a)]
\item If $H$ is closed in the pro-$p$ topology of $F_n$, then either $H \leq R$ or $\textrm{rk}(H \cap R) < \textrm{rk}(H)$.
\item If $H$ is not contained in $R$, then $\textrm{rk}(\textrm{cl}(H) \cap R) < \textrm{rk}(H)$.
\end{enumerate}
\end{lem}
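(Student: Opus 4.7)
My plan is to deduce (a) from (b) and prove (b) using pro-$p$ techniques together with the Hanna Neumann conjecture. First, since $R$ is a retract, the idempotent endomorphism $\iota r:F_n\to F_n$ (with $\iota:R\hookrightarrow F_n$) extends continuously to $\widehat F_{n,p}$, whose fixed-point set is exactly $\overline R$. Hence $R=\overline R\cap F_n$, so $R$ is closed in the pro-$p$ topology of $F_n$, and therefore $\textrm{cl}(H)\cap R=\overline H\cap F_n\cap R=\overline H\cap R$. If $H$ itself is closed in the pro-$p$ topology, then $\textrm{cl}(H)=H$, so (a) follows at once from (b) together with the trivial case $H\le R$.

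To prove (b), assume $H\not\le R$, i.e.\ $\overline H\not\le\overline R$ (using $R=\overline R\cap F_n$). By Proposition~\ref{retracts in pro-p}(i), $\overline R$ is a free factor of $\widehat F_{n,p}$, and by (ii), $\overline H\cap\overline R$ is a free factor of the topologically finitely generated closed subgroup $\overline H$. This free factor is proper, whence
\[
d(\overline H\cap\overline R)\le d(\overline H)-1\le \textrm{rk}(H)-1,
\]
where $d(\cdot)$ denotes the topological rank of a pro-$p$ group. It thus remains to show the key inequality $\textrm{rk}(\textrm{cl}(H)\cap R)\le d(\overline H\cap\overline R)$.

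Extending $r$ to a continuous retraction $\widehat r:\widehat F_{n,p}\to\overline R$, every $g\in\overline H\cap R$ satisfies $g=\widehat r(g)\in\widehat r(\overline H)=\overline{r(H)}$. Hence $\textrm{cl}(H)\cap R=\overline H\cap R\subseteq\overline{r(H)}\cap R$, which equals the pro-$p$ closure $\textrm{cl}_R(r(H))$ of $r(H)$ inside $R$ (the pro-$p$ topology of $R$ agreeing with its subspace topology from $F_n$ because $R$ is a retract). This places $\textrm{cl}(H)\cap R$ inside the pro-$p$ closure of the finitely generated subgroup $r(H)\le R$ of rank at most $\textrm{rk}(H)$. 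On the other hand, $\textrm{cl}(H)\cap R$ also sits inside the free pro-$p$ group $\overline H\cap\overline R$ of topological rank at most $\textrm{rk}(H)-1$. The Hanna Neumann conjecture (Friedman--Mineyev), applied inside the finitely generated free subgroup $\langle H,R\rangle$ of $F_n$, should supply the needed rank control.

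The hardest step is this last rank comparison, since discrete free subgroups of a free pro-$p$ group can a priori have arbitrarily large rank. The key additional ingredient I would need is a pro-$p$ separability statement tailored to retracts: namely, that $\overline{H\cap R}=\overline H\cap\overline R$ whenever $H$ is closed in the pro-$p$ topology. Once this identity is established, the pro-$p$ closedness of $H\cap R$ (as an intersection of closed subgroups) yields $\textrm{rk}(H\cap R)=d(\overline{H\cap R})=d(\overline H\cap\overline R)<\textrm{rk}(H)$; the analogous argument with $\textrm{cl}(H)$ in place of $H$ delivers (b), and hence (a).
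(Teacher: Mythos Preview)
Your proposal correctly identifies the essential pro-$p$ mechanism---that $\overline R$ is a free factor of $\widehat F_{n,p}$ and $\overline H\cap\overline R$ is a proper free factor of $\overline H$ once $H\not\le R$---but it leaves the decisive step unproved. You yourself flag the missing ingredient: the separability identity $\overline{H\cap R}=\overline H\cap\overline R$ for $H$ closed in the pro-$p$ topology. This is not something you can extract from the Hanna Neumann conjecture; your detour through $\widehat r$, $r(H)$ and HNC does not yield the needed rank bound, because, as you note, a discrete free subgroup of a free pro-$p$ group can have rank far exceeding the topological rank of the ambient group. The identity $\overline{H\cap R}=\overline H\cap\overline R$ is in fact a known result (Ribes--Zalesskii, \emph{Rev.\ Mat.\ Iberoam.}\ 2014), and the paper simply cites it. Once you invoke it, together with the equality $\textrm{rk}(K)=d(\overline K)$ for finitely generated $K$ closed in the pro-$p$ topology (another cited fact from Ribes--Zalesskii's work on pro-$p$ trees), the argument is immediate and HNC plays no role here. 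The paper reserves HNC for the proof of Theorem~\ref{D-V}, where it is applied to the intersection $H\cap(\textrm{cl}(H)\cap R)$ after this lemma has bounded $\textrm{rk}(\textrm{cl}(H)\cap R)$.

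A minor structural point: you propose to prove (b) directly and deduce (a). The paper does the reverse---it proves (a) via the separability identity and then obtains (b) by applying (a) to $\textrm{cl}(H)$ together with the standard fact $\textrm{rk}(\textrm{cl}(H))\le\textrm{rk}(H)$. Either order works once the separability is available, but going (a)$\Rightarrow$(b) is cleaner because the separability statement is naturally phrased for closed $H$.
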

\begin{proof}
Suppose that $H$ is closed in the pro-$p$ topology of $F_n$ and that $R$ does not contain $H$. Since $R$ is a retract of $F_n$, it follows from
\cite[Lemma~3.1.5]{Ribes1} that it is closed in the pro-$p$ topology of $F_n$. Hence, $R \cap H$ is also closed in the pro-$p$ topology. 
Furthermore, by \cite[Proposition~2.3]{Ribes-Zalesski2014} (see also \cite[Proposition~13.1.4]{Ribes-Book}), $\overline{H \cap R}=\overline{H} \cap \overline{R}$ in $\widehat{F}_{n,p}$, and
it follows from \cite[Lemma~5.3 $(2)$ and Corollary~5.8 $(b)$]{NH-RZ} that
\[\textrm{rk}(H \cap R)=\textrm{rk}(\overline{H \cap R})=\textrm{rk}(\overline{H} \cap \overline{R}).\]
By Proposition~\ref{retracts in pro-p}, $\overline{R}$ is a free factor of $\widehat{F}_{n,p}$ and $\overline{H} \cap \overline{R}$ is a free factor of 
$\overline{H}$. Moreover, by \cite[Lemma~5.4]{NH-RZ}, $F_n \cap \overline{H \cap R}=H \cap R < H=F_n \cap \overline{H}$. It follows that $\overline{H} \cap \overline{R}$ is in fact a proper free factor of $\overline{H}$. Hence, 
\[\textrm{rk}(H \cap R)=\textrm{rk}(\overline{H} \cap \overline{R}) < \textrm{rk}(\overline{H})=\textrm{rk}(H).\]

Now $(b)$ follows from $(a)$ and the fact that $\textrm{rk}(cl(H)) \leq \textrm{rk}(H)$ (see \cite[Proposition~11.3.1]{Ribes-Book}).
\end{proof}

\begin{proof}[Proof of Theorem~\ref{D-V}]
We may suppose that $H$ is not contained in $R$.  By Lemma~\ref{lem D-V} $(b)$, 
\[\textrm{rk}(\textrm{cl}(H) \cap R) < \textrm{rk}(H) \leq 3.\]
By the Hanna Neumann conjecture (see \cite{Friedman}, \cite{Mineyev} and \cite{Jaikin-Zapirain}; in fact, here we only need the special case, first proved in \cite{Tardos}, when one of the intersecting subgroups is of rank two), we have 
\[\textrm{rk}(H \cap R)=\textrm{rk}(H \cap (\textrm{cl}(H) \cap R)) \leq \textrm{rk}(H).\]
\end{proof}

For completeness, we prove Corollary~\ref{cor D-V}, although it follows from Theorem~\ref{D-V} by a well-known argument.

\begin{proof}[Proof of Corollary~\ref{cor D-V}]
We call a subgroup $H$ of $F_n$ $3$-inert if $\textrm{rk}(K \cap H) \leq \textrm{rk}(K)$ for every subgroup $K$ of $F_n$ with $\textrm{rk}(K) \leq 3$. 
We need to show that $\textrm{Fix}(S)$ is a $3$-inert subgroup of $F_n$ for every family $S$ of endomorphisms of $F_n$. Since the property of being $3$-inert is  closed under intersections,
we may assume that $S$ consists of a single endomorphism $\varphi$ of $F_n$.

By \cite[Theorem~1]{Turner}, $\varphi^{\infty}(F_n)=\bigcap_{k=1}^{\infty} \phi^k(F_n)$ is a retract of $F_n$. Moreover, by \cite[Theorem~1]{Imrich-Turner}, $\varphi(\varphi^{\infty}(F_n))=\varphi^{\infty}(F_n)$ and $\varphi^{\infty}=\varphi_{|\varphi^{\infty}(F_n)}:\varphi^{\infty}(F_n) \to \varphi^{\infty}(F_n)$
is an automorphism. It follows from the main theorem of \cite{Dicks} that $\textrm{Fix}(\varphi)=\textrm{Fix}(\varphi^{\infty})$ is $3$-inert in $\varphi^{\infty}(F_n)$. By Theorem~\ref{D-V}, $\varphi^{\infty}(F_n)$ is $3$-inert in $F_n$.
Since the property of being $3$-inert is transitive, it follows that $\textrm{Fix}(\varphi)$ is $3$-inert in $F_n$. 
\end{proof}

We end the paper with some further evidence for the Dicks-Ventura conjecture.

\begin{pro}
Let $R$ be a retract of $F_n$, and let $H$ be a finitely generated subgroup of $F_n$ with $[\textrm{cl}(H):H] < \infty$.
Then $\textrm{rk}(H \cap R) \leq \textrm{rk}(H)$.
\end{pro}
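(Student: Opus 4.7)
The plan is to mimic the proof of Theorem~\ref{D-V}, but in place of the Hanna Neumann conjecture (which was used there to pass from the pro-$p$ closure back to $H$) I would use the Nielsen-Schreier index formula, which becomes available thanks to the hypothesis $[\textrm{cl}(H):H]<\infty$. First I would dispose of the trivial case $H\leq R$. Assuming $H\not\leq R$, the fact that $R$ is closed in the pro-$p$ topology of $F_n$ (as noted in the proof of Lemma~\ref{lem D-V}) forces $\textrm{cl}(H)\not\leq R$. Moreover $\textrm{cl}(H)$ is finitely generated, being a finite-index overgroup of the finitely generated group $H$.

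I would then apply Lemma~\ref{lem D-V}$(a)$ with $\textrm{cl}(H)$ in place of $H$ (legitimate since $\textrm{cl}(H)$ is pro-$p$-closed, finitely generated, and not contained in $R$), obtaining
\[\textrm{rk}(\textrm{cl}(H)\cap R)\leq \textrm{rk}(\textrm{cl}(H)).\]
Setting $K=\textrm{cl}(H)\cap R$ and $d=[\textrm{cl}(H):H]$, one has $H\cap R=H\cap K$. The natural set-theoretic injection $K/(H\cap K)\hookrightarrow \textrm{cl}(H)/H$ of coset spaces gives $[K:H\cap K]\leq d$.

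Finally, I would combine two instances of the Schreier index formula for finite-index subgroups of finitely generated free groups, applied to the pairs $H\leq \textrm{cl}(H)$ and $H\cap K\leq K$. Setting aside the trivial case $K=\{1\}$, this yields
\[\textrm{rk}(H\cap K)-1=[K:H\cap K](\textrm{rk}(K)-1)\leq d\,(\textrm{rk}(\textrm{cl}(H))-1)=\textrm{rk}(H)-1,\]
using $[K:H\cap K]\leq d$ together with $\textrm{rk}(K)\leq \textrm{rk}(\textrm{cl}(H))$ from the previous step. I do not foresee any substantive obstacle: the pro-$p$ content has already been packaged in Lemma~\ref{lem D-V}, and the rest is bookkeeping via Nielsen-Schreier, with only the degenerate case $K=\{1\}$ requiring (immediate) separate treatment.
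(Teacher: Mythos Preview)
Your proposal is correct and follows essentially the same approach as the paper: apply Lemma~\ref{lem D-V}$(a)$ to $\textrm{cl}(H)$, bound the index $[\textrm{cl}(H)\cap R:H\cap R]$ by $[\textrm{cl}(H):H]$, and combine two applications of the Schreier formula. The paper's proof is more terse (it does not separate out the cases $H\leq R$ or $\textrm{cl}(H)\cap R=\{1\}$ explicitly), but the argument is the same.
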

\begin{proof}
Set $m=[\textrm{cl}(H):H]$ and $k=[\textrm{cl}(H) \cap R: H \cap R]$; observe that $k \leq m$. It follows from Lemma~\ref{lem D-V} $(a)$ and the Schreier formula that
\[\textrm{rk}(H \cap R) = k(\textrm{rk}(\textrm{cl}(H) \cap R)-1)+1 \leq m(\textrm{rk}(\textrm{cl}(H))-1)+1=\textrm{rk}(H).\]
\end{proof}

\ackn 
The first author acknowledges support from the Alexander von Humboldt Foundation, CAPES (grant 88881.145624/2017-01) and FAPERJ. 
The third  author is partially supported by CNPq.
The authors thank the Heinrich Heine University in D\"usseldorf for its hospitality.
\bibliographystyle{plain}

\begin{thebibliography}{10}
\bibitem{Baumslag} G.~Baumslag, A.~G.~Myasnikov, and  V.~Shpilrain, Open problems in combinatorial group theory. Contemp. Math. 250 (1999), 1–27.
\bibitem{Bergman} G.~M.~Bergman, \textit{Supports of derivations, free factorizations, and ranks of fixed subgroups in free groups}, Trans.\ Amer.\ Math.\ Soc. \textbf{351} (1999), 1531--1550.
\bibitem{Bestvina-Handel} M.~Bestvina and M.~Handel, \textit{Train tracks and automorphisms of free groups}, Ann.\ of Math. \textbf{135} (1992), 1-51.
\bibitem{Dicks}  W.~Dicks and E.~Ventura, The group fixed by a family of injective endomorphisms of a free group. Contemp. Math. 195 (1996).
\bibitem{Friedman}  J.~Friedman, \textit{Sheaves on graphs, their homological invariants, and a proof of the Hanna
Neumann conjecture: with an appendix by Warren Dicks}, Mem.\ Amer.\ Math.\ Soc. \textbf{233}, no.1100 (2015).
\bibitem{Imrich-Turner} W.~Imrich and E.~Turner, \textit{Endomorphisms of free groups and their fixed points}, Math.\ Proc.\ Cambridge Philos.\ Soc.\   \textbf{105} (1989), 21-22.
\bibitem{Jaikin-Zapirain} A.~Jaikin-Zapirain, \textit{Approximation by subgroups of finite index and the Hanna Neumann conjecture}, Duke Math.\ J. \textbf{166}  (2017), 1955--1987.
\bibitem{Khukhro-Mazurov}  E.~I.~Khukhro and V.~D.~Mazurov, editors. Unsolved Problems in Group Theory, The Kourovka Notebook 19th edition, Novosibirsk, 2018. 
\bibitem{LubotzkyA1} A.~Lubotzky, \textit{Combinatorial group theory for pro-$p$ groups}, J.\ Pure Appl. \ Algebra \textbf{25} (1982), 311--325.
\bibitem{Mineyev} I.~Mineyev, \textit{Submultiplicativity and the Hanna Neumann conjecture}, Ann.\ of Math. \textbf{175} (2012), 393--414.
\bibitem{Nielsen} J.~Nielsen, \textit{Die Isomorphismen der allgemeinen, unendlichen Gruppe mit zwei Erzeugenden}. Math.\ Ann. \textbf{79} (1918), 385--397.
\bibitem{Ribes1} L.~Ribes and P.~Zalesskii, Profinite groups. Second edition. Springer-Verlag, Berlin (2010).
\bibitem{NH-RZ} L.~Ribes and P.~Zalesskii, \textit{Pro-$p$ trees and applications.} In: M.~P.~F. Du Sautoy, D. Segal, A. Shalev (eds.)  New Horizons in Pro-$p$ Groups.
Progress in Mathematics, vol. \textbf{184}, pp. 75--119, Birkhauser Boston (2000). 
\bibitem{Ribes-Zalesski2014} L.~Ribes and P.~Zalesskii, \textit{Normalizers in groups and in their profinite completions}, Rev.\ Mat.\ Iberoam. \textbf{30} (2014), 165--190.
\bibitem{Ribes-Book} L.~Ribes, Profinite graphs and groups. Springer, Cham (2017).
\bibitem{Rips} E.~Rips, \textit{Commutator equations in free groups}, Israel J.\ Math. \textbf{39} (1981), 326--340.
\bibitem{SnoTan2} I.~Snopce and S.~Tanushevski, \textit{Test elements in pro-$p$ groups with applications in discrete groups},  Israel J.\ Math. \textbf{219} (2017), 783--816.
\bibitem{SnoTan1} I.~Snopce and S.~Tanushevski, \textit{Asymptotic density of test elements in free groups and surface groups}, Int.\ Math.\ Res.\ Not. \textbf{2017} (2017), 5577--5590. 
\bibitem{SnoTan3} I.~Snopce and S.~Tanushevski, \textit{Products of test elements of free factors of a free group}, J.\ Algebra   \textbf{474} (2017), 271-287.
\bibitem{SnoTan4} I.~Snopce and S.~Tanushevski, \textit{Test elements: from pro-$p$ groups to discrete groups},  to appear in  Proceedings of groups--St. Andrews 2017.
\bibitem{Tardos} G.~Tardos, \textit{On the intersection of subgroups of a free group},  Invent.\ Math. \textbf{108} (1992), 29--36.
\bibitem{Turner} E.~C.~Turner, \textit{Test words for automorphisms of free groups},
 Bull. London Math. Soc. \textbf{28} (1996),  255--263.
\bibitem{Ventura} E.~Ventura, Fixed subgroups in free groups: a survey. Contemp. Math.  296 (2002), 231–255. 
\bibitem{Zieschang1} H.~Zieschang \textit{Alternierende Produkte in freien Gruppen}, Abh.\ Math.\ Sem. \ Univ. \ Hamburg \textbf{27} (1964),  12--31.
\end{thebibliography}

\end{document}